\definecolor{hage}{rgb}{0.4,0.6,1}
\definecolor{Gray}{gray}{0.9}
\colorlet{inbox}{lightgray!20}
\colorlet{outbox}{lightgray!50}
\renewcommand*\env@matrix[1][*\c@MaxMatrixCols c]{%
  \hskip -\arraycolsep
  \let\@ifnextchar\new@ifnextchar
  \array{#1}}
\theoremstyle{definition}
\newtheorem{definition}{Definition}[section]
\theoremstyle{theorem}
\newtheorem{theorem}[definition]{Theorem}
\newtheorem{lemma}[definition]{Lemma}
\newtheorem{proposition}[definition]{Proposition}
\newcommand{\RN}[1]{\uppercase\expandafter{\romannumeral#1}}
\newcommand{\eps}{\varepsilon}
\newcommand{\R}{{\mathbb{R}}}
\newcommand{\vp}{\varphi}
\newcommand{\setdef}[2]{\left\{\ #1\ \left|\ \vphantom{#1} #2\ \right.\right\}}
\newcommand{\ddt}{\tfrac{\text{\normalfont d}}{\text{\normalfont d}t}}
\begin{document}

\begin{frontmatter}

\title{Feedback control of the COVID-19 pandemic with\\ guaranteed non-exceeding ICU capacity}

\author[Paderborn]{Thomas Berger}\ead{thomas.berger@math.upb.de}

\address[Paderborn]{Universit\"at Paderborn, Institut f\"ur Mathematik, Warburger Str.~100, 33098~Paderborn, Germany}

\begin{keyword}
COVID-19, adaptive control, funnel control, robust control, epidemiological models.
\end{keyword}

\begin{abstract}
In this paper we investigate feedback control techniques for the COVID-19 pandemic which are able to guarantee that the capacity of available intensive care unit beds is not exceeded. The control signal models the social distancing policies enacted by local policy makers. We propose a control design based on the bang-bang funnel controller which is {robust with respect to uncertainties in the parameters} of the epidemiological model and only requires measurements of the number of individuals who require medical attention. Therefore, it may serve as a first step towards a reliable decision making mechanism. Simulations illustrate the efficiency of the proposed controller.
\end{abstract}

\end{frontmatter}


\section{Introduction}

One of the most difficult problems in forecasting the effects of the COVID-19 pandemic (or any other epidemic) is to generate reliable signals for {policy makers} in the presence of uncertain data and model parameters. To resolve this, in the present work a {robust} control approach is taken, which is able to generate reliable signals for social distancing measures (or against them) {in the presence of uncertainties in the parameters of the epidemiological model and which requires only measurements of the number} of COVID-19 patients showing moderate to severe symptoms (and independent of possibly asymptomatic infected or patients with mild symptoms). The number of those symptomatic infected, who require medical attention, can typically be measured accurately~-- {under the assumption that sufficient testing capabilities are available}.

It is well documented~\cite{KissTedi20,MaieBroc20} that social distancing measures help to reduce infection rates and have an effect on the containment of the spread of SARS-CoV-2. On the other hand, social distancing  has negative effects on both the economy and the mental and emotional health of the population. Therefore, {policy makers} face the hard decision of when to enact social distancing and when to relax the measures. The present paper may serve to provide a decision making mechanism based on a {robust} feedback control design.

{In contrast to model-based techniques such as MPC (model-predictive control) used in~\cite{GrunHeyd20,KohlSchw20,MoraBast20} for instance, in the present paper we use a control methodology which does not require a specific model and is robust with respect to uncertainties in the system parameters. Because of the latter, it is not necessary to precisely identify all parameters such as the infection, recovery or death rates. Therefore, the approach may allow for an easier scalability, i.e., it may be applicable to different countries or regions or cities, without the need to precisely (re-)identify all the parameters for this region.}

The above described question of when to enact social distancing measures or even a lockdown is a typical control theoretic question. Modelling this question utilizing a {control input which takes only a finite number of different values, as suggested e.g.\ in~\cite{KohlSchw20},} a suitable feedback controller is able to generate the required signals. In the present work we {restrict ourselves to} a binary control input, i.e., with values in $\{0,1\}$, {in order to first show the feasibility of the control design for this simple case.}  To this end, we combine a widely used model for the description of the COVID-19 pandemic {from~\cite{PiguShi20}} with a control component proposed in~\cite{MoraBast20}. The latter adds additional dynamics to the model which account for the effects of social distancing policies represented by the value of the control input and the response of the population to them (paying heed to possible delays).

The control objective is to keep the number of infected with moderate to severe symptoms (which typically require hospitalization) below a threshold defined by the number of available ICU (intensive care unit) beds. {Another approach discussing control strategies which are able to bound the hospitalized population, by using control barrier functions, can be found in~\cite{AmesMoln20}. In the present paper, to achieve the control objective} we exploit the bang-bang funnel controller developed in~\cite{LibeTren13b}, which is able to guarantee error margins in tracking problems and the control input switches between only two values. Funnel control proved an appropriate tool in several applications such as temperature control of chemical reactor models~\cite{IlchTren04}, termination of fibrillation processes~\cite{BergBrei21}, control of industrial servo-systems~\cite{Hack17} and underactuated multibody systems~\cite{BergDrue21,BergOtto19}, voltage and current control of electrical circuits~\cite{BergReis14a}, DC-link power flow control~\cite{SenfPaug14} and adaptive cruise control~\cite{BergRaue20}.

{We stress that the focus of the present paper is not on modelling aspects, for which we rely on the available literature. The essential contribution is to show the fundamental functionality of the proposed control law and to prove that it achieves that the available ICU capacity is not exceeded and that it is  robust
with respect to uncertainties in the parameters of the epidemiological model. Furthermore, we provide a lower bound for the interval of time between successive switching of the control input, which can be influenced by the design parameters.}

\section{Epidemiological model for the COVID-19 pandemic {and system class}}\label{Sec:Model}

\subsection{{Dynamics of the pandemic}}

{We use a so called SIRASD (Susceptible-Infected-Recovered-Asymptomatic-Symptomatic-Deceased) model for the dynamics of the COVID-19 pandemic from~\cite{MoraBast20}, which is able} to account for possible social distancing policies. {Note that recruitment is neglected in this model.} The resulting epidemiological model has the following dynamics:
\begin{equation}\label{eq:SIRASD}
\begin{aligned}
  \dot S(t) &= - \big(\beta_A \psi(t) I_A(t) + \beta_S \psi(t) I_S(t) \big) \tfrac{S(t)}{N-D(t)},\\
  \dot I_A(t) &= (1-p) \big(\beta_A \psi(t) I_A(t) + \beta_S \psi(t) I_S(t) \big) \tfrac{S(t)}{N-D(t)} \\
  &\quad - \alpha_A I_A(t),\\
  \dot I_S(t) &= p \big(\beta_A \psi(t) I_A(t) + \beta_S \psi(t) I_S(t) \big) \tfrac{S(t)}{N-D(t)}- \tfrac{\alpha_S}{1-\rho} I_S(t),\\
  \dot R(t) &= \alpha_A I_A(t) + \alpha_S I_S(t),\\
  \dot D(t) &= \tfrac{\rho \alpha_S}{1-\rho} I_S(t),\\
  \dot \psi(t) &= \gamma_0 (1-\psi(t)) (1-u(t)) + \gamma_1 \big( K_\psi(t) \bar \psi - \psi(t)\big) u(t),
\end{aligned}
\end{equation}
with the gain function
\begin{equation}\label{eq:Kpsi}
    K_\psi(t) = 1 - \gamma_K \tfrac{\rho\alpha_A}{1-\rho} \tfrac{I_A(t)}{N-D(t)}.
\end{equation}
In~\eqref{eq:SIRASD} the total population of a considered region is split into the following {compartments}:
\begin{itemize}
  \item susceptible individuals $S(t)$,\\[-6mm]
  \item infected but asymptomatic individuals $I_A(t)$,\\[-6mm]
  \item infected and symptomatic individuals $I_S(t)$,\\[-6mm]
  \item recovered individuals $R(t)$,\\[-6mm]
  \item deceased individuals $D(t)$ (due to the disease).
\end{itemize}
It is easily seen that the derivative of the sum of the above quantities is zero, $\ddt \big( S(t) + I_A(t) + I_S(t) + R(t) + D(t) \big) = 0$
for all $t\ge 0$, thus it stays constant over time and we may define the initial population (assuming $D(0)=0$) by
\begin{align*}
    N&:= S(0) + I_A(0) + I_S(0) + R(0)\\
    & = S(t) + I_A(t) + I_S(t) + R(t) + D(t),\quad t\ge 0.
\end{align*}
The other parameters used in~\eqref{eq:SIRASD} are summarized in Table~\ref{Tab:param}, where we have $\alpha_A, \alpha_S, \beta_A, \beta_S, \rho, p,\gamma_0, \gamma_1, \bar \psi \in [0,1]$.

\newcolumntype{g}{>{\columncolor{Gray}}c}
\begin{table}
\centering
\begin{tabular}{|c|p{6.3cm}|}
   \hline
   \rowcolor{Gray}
   Parameter & Epidemiological meaning \\
  \hline
  $\beta_A$, $\beta_S$ & transmission coefficients (contact or infection rates) for an asymptomatic
   or symptomatic individual to transmit the disease to a susceptible
     individual, resp. \\
  \rowcolor{Gray}
  $\alpha_A$, $\alpha_S$ & recovery rates for asymptomatic and symptomatic infected, resp.\\
  $p$ & proportion of individuals who develop symptoms \\
  \rowcolor{Gray}
  $\rho$ & probability of a symptomatic infected individual to die from the disease before recovering \\
  $\gamma_0, \gamma_1$ & settling-time parameters used to determine the average time of the population response\\
  \rowcolor{Gray}
  $\bar \psi$ & parameter which determines the strictest possible isolation \\
  $\gamma_K$ & positive gain coefficient\\
  \hline
\end{tabular}
\caption{Parameters of the SIRASD model~\eqref{eq:SIRASD}.}\label{Tab:param}
\end{table}

\subsection{{Population response}}

The last equation in~\eqref{eq:SIRASD} models the dynamics of social distancing policies and contains additional parameters to be explained in due course. {The simplest way to model the population response would be to replace the transmission coefficient $\beta_x$, where $x$ stands for either $A$ or $S$, by $\beta_x (1-u(t))$ as in~\cite{AmesMoln20}, i.e., the control directly influences the infection rates. However, the population response is typically not instantaneous, but people change their behavior with a certain delay~-- this is accounted for by the last equation in~\eqref{eq:SIRASD}.} As introduced in~\cite{MoraBast20}, the function~$\psi$ can be seen as a time-varying population response which decreases the transmission coefficients $\beta_A$ and $\beta_S$ in the case that social distancing measures are in place. Possible delays in the response are modelled by the parameters $\gamma_0, \gamma_1$. Note that we have $\psi(t) \in [0,1]$ for all $t\ge 0$, where $\psi(t) = 1$ stands for the case of no distancing at all and $\psi(t) = 0$ would mean that any contacts between people are suppressed. Of course, the latter case is unachievable in practice, which is accounted for in the model. The control input~$u$ is assumed to take only binary values, i.e., $u(t) \in \{0,1\}$ for all $t\ge 0$. This control signal models the policy enacted by the government, where $u(t) = 0$ means that no isolation measures are in place, and $u(t)=1$ means that policy makers have determined social distancing.

The dynamics with which these measures influence the response of the population are modelled by the last equation in~\eqref{eq:SIRASD}. Here, the value of the parameter $\bar \psi\in (0,1)$, see Table~\ref{Tab:param}, is additionally influenced by a gain function $K_\psi$ defined in~\eqref{eq:Kpsi}; the value of $K_\psi(t)$ decreases as the proportion of asymptomatic infected individuals increases. Assuming that initially $\psi(0) = 1$ (no isolation) we may infer that
\begin{equation}\label{eq:bound-psi}
    \forall\, t\ge 0:\ \psi(t) \in \left( \bar K_\psi \bar \psi, 1\right],
\end{equation}
where $\bar K_\psi = 1 - \gamma_K \frac{\rho\alpha_A}{1-\rho}$. For further details on the model~\eqref{eq:SIRASD} we refer to~\cite{MoraBast20,PiguShi20}.

\subsection{{Control objective}}\label{Ssec:ContrObj}

First note that, as in~\cite{MoraBast20}, we assume that the class of asymptomatic infected also includes those with only mild symptoms, which typically do not seek medical attention and are hence often not registered as infected. The class of symptomatic infected includes those with moderate to severe symptoms which may require ICU beds and are registered by local authorities -- thus, {provided that sufficient testing capabilities are available}, $I_S(t)$ is typically accurately measured.

The objective is to determine a control input signal $u: \R_{\ge 0} \to \{0,1\}$ which guarantees that the number of available ICU beds is not exceeded. This control signal may serve as an orientation for {policy makers} whether and when to enact social distancing measures. Since the class of symptomatic infected individuals encompasses those which may require ICU beds, we seek to keep $I_S(t)$ below the number $n_{\rm ICU}$ of available ICU beds, with tolerance {$\xi \ge 0$} accounting for symptomatic infected which do not require intensive care. In other words, the aim is to achieve
\begin{equation}\label{eq:ICU}
    \forall\, t\ge 0:\ I_S(t) < (1+\xi) n_{\rm ICU}.
\end{equation}

\subsection{{System class}}

{
We now turn to the specification of a class of systems described by the epidemiological model~\eqref{eq:SIRASD} amenable to the control strategy proposed in Section~\ref{Sec:Obj}. The class of systems of the form~\eqref{eq:SIRASD} is defined by the set of admissible parameters
\[
    \Sigma := \setdef{\!\!\!\begin{array}{l} (\alpha_A, \alpha_S, \beta_A, \beta_S, \rho, p, \\ \ \gamma_0, \gamma_1, \bar \psi, \gamma_K, \xi, n_{\rm ICU}, \\ \ S^0, I_A^0, I_S^0, R^0, D^0, \psi^0)\\ \ \in [0,1]^9 \times [0,\infty)^9\end{array}\!\!\!}{ \text{(A1)--(A3) hold}}.
\]
{Before stating the assumptions (A1)--(A3), we introduce the following definitions for a set of parameters from $\Sigma$:}
\begin{equation}\label{eq:constants}
    \left.
    \begin{aligned}
        &\vp^+ := (1+\xi) n_{\rm ICU},\\
        &N := S^0 + I_A^0 + I_S^0 + R^0,\\
        & S_{\min} := S^0 e^{-\tfrac{{\max\{\beta_A,\beta_S\}} (N-R^0)}{{\min\{\alpha_A,\alpha_S\}} R^0}},\\
        &{\tilde \beta := p\beta_S + (1-p)\beta_A,}\\
        &{A:= (1-p)\beta_A - p\beta_S + \tfrac{\left(\tfrac{\alpha_S}{1-\rho}-\alpha_A\right)N}{\bar K_\psi \bar\psi S_{\min}},}\\
        &{B := -\tfrac{A}{2} + \sqrt{\tfrac{A^2}{4}+p (1-p) \beta_A\beta_S},}\\
        &{\zeta := \max\left\{\tfrac{I_A^0}{I_S^0}, \tfrac{(1-p)\beta_S}{B}\right\},}\\
        &{M_1:=\bar K_\psi \bar\psi \tilde \beta \big(1-\tfrac{R^0}{N}\big) - \alpha_A},\\
        &{M_2:= (1+\bar K_\psi \bar\psi) \tfrac{\tilde\beta}{pN} - \tfrac{\rho\alpha_S}{(1-\rho) N}},\\
        &{M_3:= p(\beta_A  \zeta + \beta_S ) \big(1-\tfrac{R^0}{N}-\tfrac{M_2}{pN M_1}\big) \tfrac{(1-\rho) M_2}{\alpha_S M_1}}.
    \end{aligned}
    \ \ \right\}
    \end{equation}
{With these quantities the assumptions are}
\begin{enumerate}[\hspace{2pt}(A1)]
\item[(A1)] 
{$p>0$, $\rho<1$,  $0< \alpha_A \le \frac{\alpha_S}{1-\rho}$, 
$\gamma_K < \frac{1-\rho}{\rho \alpha_A}$, $M_1>0$} 
\item[(A2)] $S^0>0$, $R^0>0$, {$I_S^0>0$} and $I_A^0\ge \frac{1-p}{p} I_S^0$
\item[(A3)] 
    {$\vp^+ > \max\left\{ \tfrac{M_2}{M_1}, M_3\right\}$}

\end{enumerate}
}%
{The assumptions~(A1)--(A3) are mainly of technical nature and required in the proof of the main theorem. Nevertheless,} we like to note that~(A1)--(A3) are typically satisfied in real epidemiological scenarios, see also Section~\ref{Sec:Sim}. The initial values {$S^0, R^0, I_S^0$ in~(A2) for $S(0) = S^0, R(0)=R^0$ and $I_S(0)=I_S^0$} are assumed to be positive to avoid technicalities and keep the proofs simple. In fact, these assumptions are true for the COVID-19 pandemic in practically every country in the world and nearly every region by the date this article is written (and taken as $t=0$). Furthermore, in~(A2) it is assumed that {the initial value $I_A(0) = I_A^0$ satisfies} $I_A(0) \ge \frac{1-p}{p} I_S(0)$, which is not a restrictive assumption since the number of asymptomatic infected is typically much larger than the number of symptomatic infected individuals {and usually} the case at the beginning of a (local) outbreak. {Finally, assumption~(A3) {defines a lower bound for the available ICU capacity $\vp^+$ and} is required to determine a non-empty range of control design parameters, see Section~\ref{Sec:Obj}.} {It is easy to see that the assumptions are not contradictory and hence $\Sigma$ is non-empty.}

\section{Controller design}\label{Sec:Obj}

In order to obtain a feedback control law which is able to guarantee the transient behavior {in~\eqref{eq:ICU} (and hence to achieve the control objective)}, we exploit the idea of bang-bang funnel control from~\cite{LibeTren13b}. ``Bang-bang'' means that the control input switches between only two different values, hence this technique is suitable for our purposes. We stress that system~\eqref{eq:SIRASD} does not belong to the class of systems investigated in~\cite{LibeTren13b} and hence feasibility of bang-bang funnel control needs to be investigated separately. However, due to the special structure of~\eqref{eq:SIRASD} a simpler control law is possible here. To be precise, we consider the controller
\begin{equation}\label{eq:BBFC}
  u(t) = \begin{cases}
    1, & \text{if}\ \ I_S(t) \ge \varphi^+ - \eps^+,\\
    0, & \text{if}\ \  I_S(t) \le \vp^- + \eps^-,\\
    u(t-), & \text{otherwise,}
  \end{cases}
\end{equation}
where $\vp^+, \vp^-, \eps^+, \eps^-$ are non-negative parameters satisfying  $\vp^- + \eps^- < \varphi^+ - \eps^+$ and by $u(t-)$ we denote the left limit $u(t-) = \lim_{h\searrow 0} u(t-h)$ of the piecewise constant function~$u$ at~$t$. The controller is initialized by $u(0-) = 0$.

In~\cite{LibeTren13b}, $\vp^+$ and $\vp^-$ are time-varying functions which determine a performance funnel for a certain signal to evolve in and $\eps^+, \eps^-$ are ``safety distances'' required to guarantee that the signal evolves within this funnel. In the present paper, the aim is to achieve $I_S(t) \in [\vp^- , \vp^+]$, thus, in view of the control objective {formulated in Subsection~\ref{Ssec:ContrObj}}, we may choose
\begin{equation}\label{eq:vp+-}
    \vp^- := 0,\quad \vp^+ := (1+\xi) n_{\rm ICU}.
\end{equation}
We emphasize that the feedback control strategy~\eqref{eq:BBFC} only requires the measurement of the number $I_S(t)$ of symptomatic infected individuals at time~$t$, which, as mentioned in Section~\ref{Sec:Model}, can be measured accurately.

{In the following we identify conditions on the ``safety distances'' $\eps^+, \eps^-$ so that the application of the controller~\eqref{eq:BBFC} to a system~\eqref{eq:SIRASD} with a tuple of parameters from $\Sigma$ is feasible.} In essence, these assumptions mean that by maintaining a strict lockdown, i.e., $u(t) = 1$ for $t\ge 0$, it is possible to guarantee~\eqref{eq:ICU}. If this is not possible, then no switching strategy can be successful.

{The pairs of feasible control parameters depend on the choice of system parameters in general. For any $Z\in\Sigma$ we define the control parameter set
\[
    C_Z := \setdef{(\eps^-,\eps^+)\in (0,\infty)^2 }{\!\!\begin{array}{l} \text{{$\eps^-<\vp^+-\eps^+$ and}}\\ \text{(A4)--(A5) hold}\end{array}\!\!\!},
\]
under the assumptions, where we use the constants defined in~\eqref{eq:constants} in terms of~$Z$,
\begin{enumerate}[\hspace{2pt}(A1)]
\item[(A4)] 
{$\eps^+  < \vp^+ - \tfrac{M_2}{M_1} $},\\[-2mm]
\item[(A5)] {$\frac{p(\beta_A  \zeta + \beta_S ) \eps \big(1-\tfrac{R^0}{N}-\tfrac{\eps}{pN}\big) + p \big(M_1\eps  - M_2\big) (\zeta+1)\eps}{\tfrac{\alpha_S}{1-\rho} +  \big(M_1\eps  - M_2\big)} < \vp^+$ for $\eps = \vp^+-\eps^+$}.\\[-2mm]
\end{enumerate}
}
Assumptions~(A4) and~(A5) are quite conservative since they are designed for worst-case scenarios, and hence they may impose hard restrictions in practice. However, the controller~\eqref{eq:BBFC} may even be feasible if these assumptions are not satisfied and appropriate values for $\eps^+$ and $\eps^-$ can be identified by simulations.

{It can be seen that, while~(A4) simply gives a lower bound for $\eps^-$, assumption~(A5) requires $\eps^+$ to be within a range, where both the lower and upper bound depend on $\eps^-$. Therefore, the question arises whether~(A4) and~(A5) can be simultaneously satisfied. The following result gives an affirmative answer to this question, which is based on the assumptions~(A1)--(A3) on the system parameters~$Z$.}

\begin{lemma}\label{lem:CZnonempty}
  For all $Z\in\Sigma$ the set $C_Z$ is non-empty and open.
\end{lemma}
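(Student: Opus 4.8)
The plan is to decouple the two safety distances. Writing $\eps:=\vp^+-\eps^+$, note that both (A4) and (A5) constrain $\eps^+$ alone, whereas $\eps^-$ enters only through $\eps^-<\vp^+-\eps^+=\eps$. I would therefore first exhibit a single admissible value of $\eps^+$ and then choose $\eps^-$ in the resulting open interval $(0,\eps)$. Since $M_1>0$ by (A1), (A4) is equivalent to $\eps>M_2/M_1$, i.e.\ $M_1\eps-M_2>0$; and because $0<\alpha_A\le\alpha_S/(1-\rho)$ forces $\alpha_S/(1-\rho)>0$, the denominator $\tfrac{\alpha_S}{1-\rho}+(M_1\eps-M_2)$ of (A5) is then strictly positive. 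Consequently the left-hand side of (A5), which I denote $F(\eps)$, is a well-defined continuous (rational) function of $\eps$ on $[M_2/M_1,\infty)$.

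The crucial step is to evaluate $F$ at the boundary $\eps=M_2/M_1$. There the summand $p(M_1\eps-M_2)(\zeta+1)\eps$ in the numerator vanishes and the factor $1-\tfrac{R^0}{N}-\tfrac{\eps}{pN}$ reduces to $1-\tfrac{R^0}{N}-\tfrac{M_2}{pNM_1}$, so that
\[
   F\!\left(\tfrac{M_2}{M_1}\right)=\frac{p(\beta_A\zeta+\beta_S)\,\tfrac{M_2}{M_1}\bigl(1-\tfrac{R^0}{N}-\tfrac{M_2}{pNM_1}\bigr)}{\tfrac{\alpha_S}{1-\rho}}=M_3,
\]
with $M_3$ exactly the constant from~\eqref{eq:constants}. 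This identity is what links the system-parameter assumption (A3) to feasibility of the control conditions: by (A3), $\vp^+>M_3=F(M_2/M_1)$, so (A5) holds with strict inequality already at the boundary.

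Non-emptiness then follows by continuity. There exists $\delta>0$ with $F(\eps)<\vp^+$ for all $\eps\in(M_2/M_1,M_2/M_1+\delta)$. Since (A3) also yields $\vp^+-M_2/M_1>0$, for small $\delta$ each such $\eps$ gives $\eps^+:=\vp^+-\eps\in(0,\vp^+)$ satisfying both (A4) and (A5); fixing one such $\eps^+$ and picking any $\eps^-\in(0,\vp^+-\eps^+)$ yields a point of $C_Z$.

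For openness I would observe that on the open set where (A4) holds the denominator of $F$ is positive, so $(\eps^-,\eps^+)\mapsto F(\vp^+-\eps^+)$ is continuous there; hence all defining conditions of $C_Z$ ($\eps^->0$, $\eps^+>0$, $\eps^-<\vp^+-\eps^+$, (A4), (A5)) are strict inequalities between continuous functions, and $C_Z$, being their common solution set, is open. I expect the only genuine difficulty to be verifying the identity $F(M_2/M_1)=M_3$; once one recognizes that the constants in~\eqref{eq:constants} are reverse-engineered precisely so that this holds, the rest is routine. A minor case to dispatch separately is $M_2\le0$ (then $M_2/M_1\le0$): one argues instead near $\eps\to0^+$, where $F(\eps)\to0<\vp^+$, which the same continuity reasoning covers.
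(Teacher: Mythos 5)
Your proposal is correct and follows essentially the same route as the paper's proof: reduce everything to the single variable $\eps=\vp^+-\eps^+$, observe that the function $F$ (the paper's $q$ in~\eqref{eq:proof-CZ}) is continuous on $[\tfrac{M_2}{M_1},\vp^+]$ with $F\big(\tfrac{M_2}{M_1}\big)=M_3<\vp^+$ by~(A3), pick $\eps$ slightly above $\tfrac{M_2}{M_1}$ by continuity, set $\eps^+=\vp^+-\eps$ and $\eps^-\in(0,\eps)$, and deduce openness from the fact that all defining conditions of $C_Z$ are strict inequalities between continuous functions. Your separate treatment of the case $M_2\le 0$ (arguing near $\eps\to 0^+$ so that the chosen $\eps$ is positive and $\eps^-\in(0,\eps)$ is available) is a small refinement that the paper's proof silently skips, and it is handled correctly.
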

\begin{proof} We show that there exist  $\eps^-, \eps^+> 0$ which satisfy~(A4) and~(A5). This is true if, and only if, there exists {$\frac{M_2}{M_1}<\eps<\vp^+$} which satisfies
\begin{equation}\label{eq:proof-CZ}
    {q(\eps) := \tfrac{p(\beta_A  \zeta + \beta_S ) \eps \big(1-\tfrac{R^0}{N}-\tfrac{\eps}{pN}\big) + p \big(M_1\eps  - M_2\big) (\zeta+1)\eps}{\tfrac{\alpha_S}{1-\rho} +  \big(M_1\eps  - M_2\big)} < \vp^+.}
\end{equation}
{Then we may define $\eps^+ := \vp^+-\eps$ and choose $\eps^-<\eps$, which satisfy $(\eps^-,\eps^+)\in C_Z$. The existence of $\eps$ as required above follows immediately from the observation that $q(\cdot)$ is continuous on the interval $\left[\tfrac{M_2}{M_1},\vp^+\right]$ and $q\left(\tfrac{M_2}{M_1}\right) = M_3 < \vp^+$ by~(A3).}
%
This shows that the set $C_Z$ is non-empty. Finally, from assumptions (A4) and (A5) it is clear that $C_Z$ is open.
\end{proof}

{Since the values of the input~$u$ have the interpretation of enacted social distancing policies, it is typically not desired that fast switching between the two binary values occurs. To this end, in the remainder of this section we derive a lower bound for the interval of time between successive switches, the so-called dwell time. If $u(t_0)=1$ with $u(t_0-)=0$, then $I_S(t_0) = \vp^+-\eps^+$ and as long as $I_S(t)>\eps^-$ we have that $u(t)=1$. Thus the dwell time is $t_1-t_0$ with $t_1 = \inf\setdef{t\ge t_0}{I_S(t) = \eps^-}$. Similarly, if $u(t_0)=0$ with $u(t_0-)=1$, then $I_S(t_0) = \eps^-$ and as long as $I_S(t)<\vp^+-\eps^+$ we have that $u(t)=0$. Then the dwell time is $t_1-t_0$ with $t_1 = \inf\setdef{t\ge t_0}{I_S(t) = \vp^+-\eps^+}$. Lower bounds for the dwell times are given in the following result.}

\begin{proposition}\label{Prop:dwell_time}
{Let $Z\in\Sigma$, $(\eps^-,\eps^+)\in C_Z$ and assume that $(S,I_A,I_S,R,D,\psi):\R_{\ge 0}\to \R^6$ is a global solution of~\eqref{eq:SIRASD} under the control~\eqref{eq:BBFC}. Then we have the following implications for all $0\le t_0\le t_1$:
\begin{align*}
  I_S(t_0) &= \vp^+-\eps^+\ \wedge\ I_S(t_1) = \eps^-\\
   &\implies\quad t_1-t_0 \ge \frac{1-\rho}{\alpha_S} \ln \left(\frac{\vp^+-\eps^+}{\eps^-}\right) > 0,\\
  I_S(t_0) &= \eps^-\ \wedge\ I_S(t_1) =\vp^+-\eps^+ \\
  &\implies\quad t_1-t_0 \ge\frac{1}{\mu} \ln \left(\frac{(\vp^+-\eps^+)^2}{(\eps^-)^2 + I_A(t_0)^2}\right),
\end{align*}
where $\mu = \max\big\{\tfrac{1+p}{2}\beta_S + \tfrac{p}{2}\beta_A - \frac{\alpha_S}{1-\rho}, \frac{2-p}{2}\beta_A +\frac{1-p}{2}\beta_S-\alpha_A,\delta\big\}$ for some arbitrary $\delta>0$.}
\end{proposition}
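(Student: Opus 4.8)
The plan is to derive the two dwell-time lower bounds by constructing, in each of the two switching scenarios, a scalar differential inequality for an appropriate quantity built from $I_S$ (and, in the second case, $I_A$) and then integrating it, using a Gronwall-type comparison argument. The key observation driving both estimates is that on the relevant time interval the control value $u$ is constant, so the last equation of~\eqref{eq:SIRASD} and the bounds~\eqref{eq:bound-psi} on $\psi$ can be invoked uniformly.

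For the first implication, I would work on the interval $[t_0,t_1]$ where $u\equiv 1$ (because $I_S$ starts at the upper threshold $\vp^+-\eps^+$ and decreases toward $\eps^-$, the controller~\eqref{eq:BBFC} holds $u=1$ throughout). The crucial fact is that the nonlinear infection term in $\dot I_S$ is nonnegative, since $S(t)\ge 0$, $\psi(t)>0$, and $I_A, I_S\ge 0$ for a solution of an epidemiological model. Discarding this nonnegative gain term gives the one-sided estimate $\dot I_S(t) \ge -\tfrac{\alpha_S}{1-\rho} I_S(t)$, i.e.\ $\tfrac{\text{\normalfont d}}{\text{\normalfont d}t}\ln I_S(t) \ge -\tfrac{\alpha_S}{1-\rho}$. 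Integrating from $t_0$ to $t_1$ and using $I_S(t_0)=\vp^+-\eps^+$, $I_S(t_1)=\eps^-$ yields $\ln\eps^- - \ln(\vp^+-\eps^+) \ge -\tfrac{\alpha_S}{1-\rho}(t_1-t_0)$, which rearranges exactly to the claimed bound $t_1-t_0 \ge \tfrac{1-\rho}{\alpha_S}\ln\!\big(\tfrac{\vp^+-\eps^+}{\eps^-}\big)$; positivity follows from $\eps^- < \vp^+-\eps^+$.

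For the second implication, on $[t_0,t_1]$ we have $u\equiv 0$ and $I_S$ growing from $\eps^-$ up to $\vp^+-\eps^+$. Here I would bound the growth rate from \emph{above}, which requires controlling the infection gain term, so I expect this to be the main obstacle. The natural Lyapunov-type candidate is $V(t) := I_A(t)^2 + I_S(t)^2$, for which $\dot V = 2 I_A \dot I_A + 2 I_S \dot I_S$. Expanding $\dot I_A, \dot I_S$ from~\eqref{eq:SIRASD}, using $\tfrac{S}{N-D}\le 1$ and $\psi\le 1$ to bound the transmission terms, and applying the elementary inequalities $I_A I_S \le \tfrac12(I_A^2+I_S^2)$ to the cross terms, I expect to obtain a linear differential inequality $\dot V(t) \le \mu\, V(t)$, where the coefficient $\mu$ is precisely the maximum of the three quantities displayed in the statement — the first two arising as coefficients of $I_S^2$ and $I_A^2$ after the cross-term splitting, and the arbitrary $\delta>0$ ensuring $\mu>0$ so that the logarithm below is well-defined even if the first two quantities are nonpositive.

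Having established $\dot V \le \mu V$, Gronwall's inequality gives $V(t_1) \le V(t_0) e^{\mu(t_1-t_0)}$. Since $I_S(t_1)=\vp^+-\eps^+$ we have $V(t_1)\ge (\vp^+-\eps^+)^2$, while $V(t_0) = I_A(t_0)^2 + (\eps^-)^2$ from $I_S(t_0)=\eps^-$. Combining, $(\vp^+-\eps^+)^2 \le \big((\eps^-)^2 + I_A(t_0)^2\big) e^{\mu(t_1-t_0)}$, and taking logarithms and dividing by $\mu>0$ produces the stated bound $t_1-t_0 \ge \tfrac{1}{\mu}\ln\!\Big(\tfrac{(\vp^+-\eps^+)^2}{(\eps^-)^2+I_A(t_0)^2}\Big)$. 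The only delicate points are verifying nonnegativity and the elementary bounds on the solution components (which hold because $(S,I_A,I_S,R,D,\psi)$ is a genuine trajectory of the model with the invariances $\tfrac{S}{N-D}\in[0,1]$ and $\psi\in(\bar K_\psi\bar\psi,1]$), and tracking the algebra in the expansion of $\dot V$ carefully enough that the coefficients collapse to exactly the three terms defining $\mu$.
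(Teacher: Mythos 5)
Your proposal is correct in structure and takes essentially the same route as the paper's proof: the first bound is obtained exactly as in the paper by discarding the nonnegative infection term to get $\dot I_S(t)\ge -\tfrac{\alpha_S}{1-\rho}I_S(t)$ and integrating, and the second via a quadratic Lyapunov function (the paper uses $z=\tfrac12(I_A^2+I_S^2)$ instead of your $V=I_A^2+I_S^2$, which is immaterial), cross-term splitting with $I_AI_S\le\tfrac12(I_A^2+I_S^2)$, and Gr\"onwall. One caveat, which applies equally to the paper's own proof: carrying out the algebra, the coefficients of $I_A^2$ and $I_S^2$ in $\dot V$ come out as \emph{twice} the two quantities appearing in $\mu$ (in your formulation because of the factor $2$ in $\dot V=2I_A\dot I_A+2I_S\dot I_S$; in the paper's because $I_A^2+I_S^2=2z$), so the differential inequality actually obtained is $\dot V\le 2\mu V$, not $\dot V\le \mu V$, and strictly speaking the second dwell-time bound should read $\tfrac{1}{2\mu}\ln(\cdot)$ --- since the paper performs the identical step, your reconstruction is faithful to it, but the coefficients do not ``collapse to exactly'' $\mu$ as you hope. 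Finally, your framing remarks that $u\equiv 1$ (resp.\ $u\equiv 0$) on $[t_0,t_1]$ are neither needed nor justified (the proposition allows arbitrary times with the stated values, and $I_S$ need not evolve monotonically between them); as in the paper, both estimates hold uniformly in $u$, using only $\psi\le 1$, $\tfrac{S}{N-D}\le 1$ and nonnegativity of the solution components.
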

\begin{proof}{
First consider the case that $I_S(t_0) = \vp^+-\eps^+$ and $I_S(t_1) = \eps^-$. From~\eqref{eq:SIRASD},~\eqref{eq:bound-psi} and Lemma~\ref{lem:pos_sln} we see immediately that $\dot I_S(t) \ge -\frac{\alpha_S}{1-\rho} I_S(t)$, thus
\[
    \eps^- = I_S(t_1) \ge e^{-\frac{\alpha_S}{1-\rho}(t_1-t_0)} I_S(t_0) = e^{-\frac{\alpha_S}{1-\rho}(t_1-t_0)} \big(\vp^+-\eps^+),
\]
from which the first implication follows. Note that $\tfrac{\vp^+-\eps^+}{\eps^-}>1$ and hence the lower bound is positive.\\
Next consider the case that $I_S(t_0) = \eps^-$ and $I_S(t_1) = \vp^+-\eps^+$. From~\eqref{eq:SIRASD},~\eqref{eq:bound-psi}, Lemma~\ref{lem:pos_sln} and the fact that $S(t)\le N-D(t)$ we may infer that
\begin{align*}
  \dot I_A(t) &\le \big((1-p)\beta_A - \alpha_A\big) I_A(t) + (1-p)\beta_S I_S(t),\\
  \dot I_S(t) &\le \big(p\beta_S-\tfrac{\alpha_S}{1-\rho}\big) I_S(t) + p\beta_A I_A(t).
\end{align*}
Define $z(t) = \tfrac12\big( I_A(t)^2 + I_S(t)^2\big)$, then
\begin{align*}
  \dot z(t) &=  \big((1-p)\beta_A - \alpha_A\big) I_A(t)^2 + (1-p)\beta_S I_S(t) I_A(t) \\
  &\quad + \big(p\beta_S-\tfrac{\alpha_S}{1-\rho}\big) I_S(t)^2 + p\beta_A  I_A(t)I_S(t) \\
  &\le \big((1-p)\beta_A - \alpha_A + \tfrac{1-p}{2}\beta_S + \tfrac{p}{2} \beta_A\big) I_A(t)^2 \\
  &\quad +\big(p\beta_S-\tfrac{\alpha_S}{1-\rho} + \tfrac{1-p}{2}\beta_S + \tfrac{p}{2} \beta_A\big) I_S(t)^2\\
  &\le \mu z(t)
\end{align*}
for all $t\ge 0$, thus
\begin{align*}
  (\vp^+-\eps^+)^2 &= I_S(t_1)^2 \le 2 z(t_1) \le 2 e^{\mu (t_1-t_0)} z(t_0)\\
  &=e^{\mu (t_1-t_0)} \big(I_A(t_0)^2 + (\eps^-)^2\big),
\end{align*}
from which the second implication follows.
}
\end{proof}

{Note that the lower bound for the dwell time in $u(t)=0$ in Proposition~\ref{Prop:dwell_time} depends on $I_A(t_0)$, and if this value is very large, then the lower bound may become negative, thus not giving any result. Furthermore, both lower bounds essentially depend on the parameters $\vp^+,\eps^+,\eps^-$, which can be adjusted in order to shape the minimum dwell time. In particular, for $\vp^+\to\infty$ any desired minimum dwell time can be achieved (since $I_A(t_0)$ is bounded by~$N$).}

\section{Main results}

\subsection{{Feasibility of the control design}}

In the following {first} main result of the present paper we prove that, {for any tuple of system parameters $Z\in\Sigma$ and any pair of controller parameters $(\eps^-,\eps^+)\in C_Z$}, the application of the bang-bang control law~\eqref{eq:BBFC} {with parameters $(\eps^-,\eps^+)$} to the epidemiological model~\eqref{eq:SIRASD} {with parameters $Z$ leads to a closed-loop system, which has a global and bounded solution that satisfies~\eqref{eq:ICU}.} Moreover, the control input~$u$ has only a finite number of jumps in each compact set. Solutions are considered in the sense of Carath\'{e}odory, i.e., they are assumed to be locally absolutely continuous and satisfy the differential equation almost everywhere. A solution is said to be maximal, if it has no right extension that is also a solution.

\begin{theorem}\label{Thm:BBFC}
{Let $Z\in\Sigma$ and consider the associated system~\eqref{eq:SIRASD}. Further let $(\eps^-,\eps^+)\in C_Z$ be a pair of controller parameters, $\vp^+$ be as in~\eqref{eq:vp+-} and} assume that
\[
    I_S^0 \in [0, \vp^+-\eps^+],\quad D^0 = 0\quad \text{and}\quad \psi^0 = 1.
\]
Then the controller~\eqref{eq:BBFC} applied to~\eqref{eq:SIRASD}, {under the initial conditions $S(0)=S^0, I_A(0)=I_A^0, I_S(0)=I_S^0, R(0)=R^0, D(0)=D^0, \psi(0)=\psi^0$}, leads to a closed-loop system which has a global and bounded solution $(S,I_A,I_S,R,D,\psi) : \R_{\ge 0}\to (\R_{\ge 0})^6$ such that~\eqref{eq:ICU} holds and~$u$ defined by~\eqref{eq:BBFC} has {finitely many jumps in each compact set}.
\end{theorem}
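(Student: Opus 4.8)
The plan is to build the closed-loop solution by concatenating classical pieces across the switching instants and then to verify the three claims by a priori estimates. On any interval where $u$ is constant the right-hand side of~\eqref{eq:SIRASD} is rational and locally Lipschitz in the state—the denominator stays bounded away from zero because $N-D\ge R^0>0$ by~(A2) and $R$ is nondecreasing—so Picard--Lindel\"of gives a unique local classical solution. Since $I_S$ is continuous and the thresholds of~\eqref{eq:BBFC} are separated by the hysteresis gap $\eps^-<\vp^+-\eps^+$, switching instants are isolated and the pieces glue into a maximal Carath\'eodory solution on some $[0,\omega)$. I would then invoke the positivity lemma (Lemma~\ref{lem:pos_sln}) to record nonnegativity of all components, $\psi(t)\in(\bar K_\psi\bar\psi,1]$ via~\eqref{eq:bound-psi} (with $\bar K_\psi>0$ by~(A1)), and $S(t)\ge S_{\min}$; boundedness is then immediate since $S+I_A+I_S+R+D\equiv N$.

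The first nontrivial ingredient is the invariant ratio bound $I_A(t)\le\zeta\,I_S(t)$. Writing $r:=I_A/I_S$ and $g:=\psi\tfrac{S}{N-D}$, a direct computation gives $\dot r = g\big(-p\beta_A r^2+((1-p)\beta_A-p\beta_S)r+(1-p)\beta_S\big)+\big(\tfrac{\alpha_S}{1-\rho}-\alpha_A\big)r$. Since $\tfrac{\alpha_S}{1-\rho}-\alpha_A\ge0$ by~(A1) and $g\ge \bar K_\psi\bar\psi\tfrac{S_{\min}}{N}$, the last term can be absorbed into the first, yielding the worst-case inequality $\dot r\le g\big(-p\beta_A r^2+Ar+(1-p)\beta_S\big)$; its larger root is precisely $(1-p)\beta_S/B$, which is exactly why $A$ and $B$ are defined as in~\eqref{eq:constants} (so that $B(A+B)=p(1-p)\beta_A\beta_S$). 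As $\zeta\ge\max\{I_A^0/I_S^0,(1-p)\beta_S/B\}$, the set $\{r\le\zeta\}$ is forward invariant and the ratio bound follows. This is what allows me to replace $p\beta_A I_A$ in $\dot I_S$ by $p\beta_A\zeta\,I_S$ and reduce the overshoot analysis to a scalar estimate for $I_S$.

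The core of the theorem is the ICU bound~\eqref{eq:ICU}, which I would prove by contradiction. Suppose $t^\ast$ were the first time with $I_S(t^\ast)=\vp^+$. Since $I_S^0\le\vp^+-\eps^+$, there is a last instant $t_0<t^\ast$ with $I_S(t_0)=\vp^+-\eps^+=:\eps$, and because $\vp^+-\eps^+>\eps^-$ the controller~\eqref{eq:BBFC} keeps $u\equiv1$ throughout $[t_0,t^\ast]$. On this lockdown interval $\dot\psi=\gamma_1(K_\psi\bar\psi-\psi)$ drives $\psi$ downward, attenuating the transmission term $\psi\tfrac{S}{N-D}$ in $\dot I_S$; the constants $M_1,M_2$ of~\eqref{eq:constants} quantify this attenuation and its effect on $I_S$, and~(A4) guarantees $\eps>M_2/M_1$ (well defined as $M_1>0$ by~(A1)) so that these rates carry the right sign. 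Combining the attenuation with the ratio bound, I expect to dominate the peak value of $I_S$ attained under $u\equiv1$ started from $I_S=\eps$ by the quantity $q(\eps)$ of~\eqref{eq:proof-CZ}; assumption~(A5) then reads $q(\eps)<\vp^+$, contradicting $I_S(t^\ast)=\vp^+$ and establishing~\eqref{eq:ICU}. I anticipate this peak estimate—correctly assembling the bound $q(\eps)$, with its enhanced denominator $\tfrac{\alpha_S}{1-\rho}+(M_1\eps-M_2)$, out of the coupled $I_S$, $\psi$ and $S/(N-D)$ dynamics—to be the main obstacle, as it is precisely the step for which the delicate definitions of $M_1,M_2,M_3$ were tailored.

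Finally I would settle global existence and the finite-jump property using Proposition~\ref{Prop:dwell_time}. Its first implication gives a fixed positive lower bound $\tfrac{1-\rho}{\alpha_S}\ln\big(\tfrac{\vp^+-\eps^+}{\eps^-}\big)$ for the length of every interval on which $u=1$. Since such intervals alternate with the $u=0$ intervals, only finitely many of them can fit into any compact $[0,T]$; consequently $u$ has finitely many jumps there and the switching times do not accumulate. Together with the boundedness established above and $N-D\ge R^0>0$, the solution remains in a fixed compact subset of the domain, so it cannot escape in finite time, and near any finite $\omega$ the input is eventually constant—whence the classical piece extends past $\omega$. Therefore $\omega=\infty$, the solution is global and bounded with values in $(\R_{\ge 0})^6$, and all assertions of the theorem follow.
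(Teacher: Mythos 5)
Your scaffolding matches the paper's proof almost step for step: local existence by gluing constant-$u$ pieces (the paper cites \cite{LibeTren10pp,LibeTren13b} instead of redoing Picard--Lindel\"of), boundedness from nonnegativity and $S+I_A+I_S+R+D\equiv N$, finitely many jumps via the first implication of Proposition~\ref{Prop:dwell_time}, globality because jumps cannot accumulate, and the contradiction setup for~\eqref{eq:ICU} with $t_0$ the last time $I_S=\vp^+-\eps^+$ and $u\equiv 1$ on $[t_0,t^\ast]$. Your reconstruction of the ratio bound $I_A\le\zeta I_S$ (including the identity $B(A+B)=p(1-p)\beta_A\beta_S$ identifying the larger root) is a correct re-derivation of Lemma~\ref{lem:pos_sln}\,(iv). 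But the heart of the theorem is exactly the step you defer: you write that you ``expect'' the peak of $I_S$ under lockdown to be dominated by $q(\eps)$ and that you ``anticipate'' this to be the main obstacle. That estimate is never derived, so the proof of~\eqref{eq:ICU} --- the actual content of the theorem --- is missing.

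The gap is not a routine verification. A first-order bound on $\dot I_S$ cannot close the argument: with $\psi\le 1$, $S/(N-D)\le 1$ and $I_A\le\zeta I_S$ one only gets $\dot I_S\le\big(p(\beta_A\zeta+\beta_S)-\tfrac{\alpha_S}{1-\rho}\big)I_S$, which for realistic parameters is exponential growth and gives no bound below $\vp^+$; at time $t_0$ the response $\psi$ may still be close to $1$, so ``attenuation of $\psi$'' alone does not yet act. The paper's resolution is a second-order argument: on $[t_0,t_1]$ one bounds $\ddot I_S(t)\le -\tfrac{\alpha_S}{1-\rho}\dot I_S(t)+p\dot S(t)\big(M_1\eps-M_2\big)$, using $\dot\psi<0$, Lemma~\ref{lem:pos_sln}\,(ii), (A1), $I_S\ge\eps$ on this interval, $\dot S\le 0$ and (A4) (which makes $M_1\eps-M_2>0$, so this is where $M_1,M_2$ actually enter --- as coefficients multiplying $p\dot S$, not as decay rates of $\psi$). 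Integrating from $t_0$, bounding $\dot I_S(t_0)$ via~\eqref{eq:bound-psi} and Lemma~\ref{lem:pos_sln}\,(ii),(iv), and converting the terms $S(t)-S(t_0)$ into $(I_A+I_S)(t_0)-(I_A+I_S)(t)$ yields the scalar inequality $\dot I_S(t)<M_3(\eps)-M_4(\eps)I_S(t)$ with $M_4(\eps)=\tfrac{\alpha_S}{1-\rho}+(M_1\eps-M_2)$; Gr\"onwall then gives $I_S(t_1)<\max\{\eps,\,M_3(\eps)/M_4(\eps)\}<\vp^+$ by (A5). Without this chain --- or a genuine substitute exploiting the accumulated decrease of $S$ during the lockdown interval --- your contradiction never materializes, so the proposal as it stands does not prove the statement.
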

\begin{proof}
{Throughout the proof we use the constants defined in~\eqref{eq:constants} without further notice.} We divide the proof into several steps, {where we first show the existence of a local solution (Step~1). In Step~2 we show that~$u$ has only finitely many jumps in each compact set, which enables us to prove that the solution is actually global (Step~3). It then remains to show~\eqref{eq:ICU} in Step~4.}

\emph{Step 1}: We show the existence of a local solution. In fact, repeating the arguments of~\cite[Cor.~3.3]{LibeTren10pp} or~\cite[Thm.~5.3]{LibeTren13b} yields a maximal solution  $(S,I_A,I_S,R,D,\psi) : [0,\omega) \to \R^6$ of~\eqref{eq:SIRASD},~\eqref{eq:BBFC} with $\omega \in (0,\infty]$. {Clearly, $S,I_A,I_S,R,D$ are continuously differentiable, since they are absolutely continuous and by~\eqref{eq:SIRASD} their derivatives are continuous.} Note that clearly the solution is bounded as each component is non-negative {(cf.\ Lemma~\ref{lem:pos_sln}\,(i))} and we have $S(t) + I_A(t) + I_S(t) + R(t) + D(t) = N$ and $\psi(t) \le 1$ for all $t\in [0,\omega)$. Therefore, we have that $N - D(t) = S(t) + I_A(t) + I_S(t) + R(t) \ge S(t)$ by which $\frac{S(t)}{N-D(t)} \le 1$ for all $t\in [0,\omega)$ and hence the respective quotient in~\eqref{eq:SIRASD} is uniformly bounded.


\emph{Step 2}: We show that~$u$ has only finitely many jumps in each interval $[a,b)\subseteq [0,\omega)$. {This is an immediate consequence of the first implication in Proposition~\ref{Prop:dwell_time}.} 

\emph{Step 3}: We show that $\omega = \infty$. Since $(S,I_A,I_S,R,D,\psi)$ is bounded as $S(t) + I_A(t) + I_S(t) + R(t) + D(t) = N$ and $\psi(t) \le 1$ for all $t\in [0,\omega)$, the case $\omega < \infty$ is only possible when the jumps in~$u$ accumulate for $t\to\omega$, but this is excluded by Step~2.

\emph{Step 4}: It remains to show~\eqref{eq:ICU}. {For brevity, set $\eps := \vp^+ - \eps^+$. Let $t_1 := \inf \setdef{t\ge 0}{ I_S(t) = \vp^+}$ and, seeking a contradiction, assume that $t_1 < \infty$. Set $t_0:=\max \setdef{t\in[0,t_1)}{I_S(t) = \eps}$, then $I_S(t_0)=\eps$, $I_S(t)\le \vp^+$ for all $t\in[0,t_1]$ and $I_S(t)\ge \eps$ for all $t\in[t_0,t_1]$. We show that $I_S(t_1) < \vp^+$, which contradicts the assumptions and thus proves the claim.}

{Note that $\dot I_S$ is almost everywhere differentiable since $u$ is piecewise constant and hence $\psi$ is almost everywhere differentiable.} 
Furthermore, by~\eqref{eq:BBFC} we have $u(t) = 1$ for all $t\in [t_0,t_1]$ and hence $\dot \psi(t) = \gamma_1 \big( K_\psi(t) \bar \psi - \psi(t)\big)<0$. Then we may calculate
\begin{align*}
  &\ddot I_S(t) = p \dot \psi(t) \big( \beta_A I_A(t) + \beta_S I_S(t)\big) \tfrac{S(t)}{N-D(t)}- \tfrac{\alpha_S}{1-\rho} \dot I_S(t)\\
   &\quad + p \psi(t) \big( \beta_A \dot I_A(t) + \beta_S \dot I_S(t)\big) \tfrac{S(t)}{N-D(t)}\\
  &\quad + p \psi(t) \big( \beta_A I_A(t) + \beta_S I_S(t)\big) \tfrac{\dot S(t) (N-D(t)) + S(t) \dot D(t)}{(N-D(t))^2} \\
  &{\stackrel{\eqref{eq:SIRASD}}{\le}  - \tfrac{\alpha_S}{1-\rho} \dot I_S(t) + \tfrac{p \psi(t) S(t)}{N-D(t)} \Big( \!-\!\beta_A \big((1\!-\!p) \dot S(t) \!+\! \alpha_A I_A(t)\big)} \\
  &\quad {- \beta_S \big(p\dot S(t) +  \tfrac{\alpha_S}{1-\rho} I_S(t)\big) + \big( \beta_A I_A(t) + \beta_S I_S(t)\big) \big(\tfrac{\dot S(t)}{S(t)}} \\
  &\quad {  + \tfrac{\rho\alpha_S}{1-\rho} \tfrac{I_S(t)}{N-D(t)}\big)\Big)} \\
  &{\stackrel{\rm (A1),\eqref{eq:SIRASD}}{\le}  - \tfrac{\alpha_S}{1-\rho} \dot I_S(t) - \tfrac{p \psi(t) S(t)}{N-D(t)} \Big( \tilde \beta \dot S(t) + \alpha_A \big(\beta_A I_A(t) } \\
  &\quad {+ \beta_S I_S(t)\big)\Big) +p \dot S(t) \Big(\big( \beta_A I_A(t) + \beta_S I_S(t)\big) \tfrac{\psi(t)}{N-D(t)}} \\
  &\quad {  - \tfrac{\rho\alpha_S}{1-\rho} \tfrac{I_S(t)}{N-D(t)}\big)\Big)} \\
  &{\stackrel{\eqref{eq:SIRASD}}{=}  - \tfrac{\alpha_S}{1-\rho} \dot I_S(t) + p \dot S(t)  \Big( -\tilde \beta \tfrac{\psi(t) S(t)}{N-D(t)} + \alpha_A }\\
  &\quad {+ \big( \beta_A I_A(t) + \beta_S I_S(t)\big) \tfrac{\psi(t)}{N-D(t)}  - \tfrac{\rho\alpha_S}{1-\rho} \tfrac{I_S(t)}{N-D(t)}\Big)} \\
  &{\stackrel{\eqref{eq:bound-psi},\rm Lem.\!\,\ref{lem:pos_sln}\,(ii)}{\le}  - \tfrac{\alpha_S}{1-\rho} \dot I_S(t) + p \dot S(t)  \Big( \alpha_A - \tilde \beta \bar K_\psi \bar \psi \tfrac{S(t)}{N-D(t)}}\\
  &\quad {+ \big( \tfrac{\tilde\beta}{p}  - \tfrac{\rho\alpha_S}{1-\rho} \big)\tfrac{I_S(t)}{N-D(t)}\Big)} \\
  &{\stackrel{\rm Lem.\!\,\ref{lem:pos_sln}\,(ii)}{\le}  - \tfrac{\alpha_S}{1-\rho} \dot I_S(t) + p \dot S(t)  \Big( \alpha_A -  \tfrac{\tilde \beta \bar K_\psi \bar \psi}{N-D(t)} \big(N-D(t)}\\
  &\quad {-R(t)-\tfrac{1}{p} I_S(t)\big)+ \big( \tfrac{\tilde\beta}{p}  - \tfrac{\rho\alpha_S}{1-\rho} \big)\tfrac{I_S(t)}{N-D(t)}\Big)} \\
  &{\stackrel{\rm (A1)}{\le}  - \tfrac{\alpha_S}{1-\rho} \dot I_S(t) + p \dot S(t)  \Big( \alpha_A -  \tilde \beta \bar K_\psi \bar \psi \big(1-\tfrac{R_0}{N}\big)}\\
  &\quad {+ \big( (1+\bar K_\psi \bar \psi ) \tfrac{\tilde\beta}{pN}  - \tfrac{\rho\alpha_S}{(1-\rho)N} \big) I_S(t)\Big)} \\
  &{{\le}  - \tfrac{\alpha_S}{1-\rho} \dot I_S(t) + p \dot S(t) \big(M_1 \eps - M_2\big)}
\end{align*}
for {almost} all $t\in [t_0,t_1]$, {where we have used that $I_S(t) \ge \eps$ in the last inequality. By~(A4) we have that $ M_1\eps  - M_2 > 0$ for $M_1, M_2$ defined in~\eqref{eq:constants}.} 
{Upon integration we obtain that
\begin{align*}
  &\dot I_S(t) = \dot I_S(t_0) + \int_{t_0}^t \ddot I_S(s) {\rm d}s \\
  &\stackrel{\eqref{eq:bound-psi}}{<} p(\beta_A I_A(t_0) \!+\! \beta_S I_S(t_0))\left(1-\tfrac{R(t_0)+I_A(t_0)+I_S(t_0)}{N-D(t_0)}\right)  \\
  &\quad - \tfrac{\alpha_S}{1-\rho} I_S(t_0) - \tfrac{\alpha_S}{1-\rho} (I_S(t) - I_S(t_0)) \\
  &\quad + p (S(t)-S(t_0))  \big(M_1\eps  - M_2\big) \\
  &\stackrel{\rm Lem.\!\,\ref{lem:pos_sln}\,(ii),(iv)}{<}  p\left(\beta_A  \zeta + \beta_S \right) \eps \left(1-\tfrac{R^0}{N}-\tfrac{\eps}{pN}\right) - \tfrac{\alpha_S}{1-\rho} I_S(t)   \\
   &\quad  + p \big(M_1\eps  - M_2\big) \big(I_A(t_0) - I_A(t) + I_S(t_0) -  I_S(t)\big) \\
 &\stackrel{\rm Lem.\!\,\ref{lem:pos_sln}\,(ii),(iv)}{<}  p\left(\beta_A  \zeta + \beta_S \right) \eps \left(1-\tfrac{R^0}{N}-\tfrac{\eps}{pN}\right) - \tfrac{\alpha_S}{1-\rho} I_S(t)   \\
   &\quad  + p \big(M_1\eps  - M_2\big) \big((\zeta+1)\eps - \tfrac{1}{p} I_S(t)\big)\\
   &= M_3(\eps) - M_4(\eps) I_S(t)
\end{align*}
for all $t\in [t_0,t_1]$, where $M_3(\eps) :=  p\left(\beta_A  \zeta + \beta_S \right) \eps \left(1-\tfrac{R^0}{N}-\tfrac{\eps}{pN}\right) + p \big(M_1\eps  - M_2\big) (\zeta+1)\eps$ and $M_4(\eps) := \tfrac{\alpha_S}{1-\rho} +  \big(M_1\eps  - M_2\big)$. Then Gr\"onwall's lemma gives
\begin{align*}
  I_S(t) &< e^{-M_4(\eps) (t-t_0)} I_S(t_0) + \int_{t_0}^t e^{-M_4(\eps)(t-s)} M_3(\eps) {\rm d}s\\
  &= \left( \eps - \tfrac{M_3(\eps)}{M_4(\eps)}\right) e^{-M_4(\eps) (t-t_0)} + \tfrac{M_3(\eps)}{M_4(\eps)} =: q(t)
\end{align*}
for all  $t\in [t_0,t_1]$. If $\eps \le \tfrac{M_3(\eps)}{M_4(\eps)}$, then~(A5) gives that $q(t)\le \tfrac{M_3(\eps)}{M_4(\eps)} <\vp^+$ for all $t\in [t_0,t_1]$, and if $\eps > \tfrac{M_3(\eps)}{M_4(\eps)}$, then $q(\cdot)$ is monotonically decreasing on $[t_0,t_1]$, thus $q(t) \le q(t_0) = \eps < \vp^+$.}
This finishes the proof.
\end{proof}

We like to note that in Theorem~\ref{Thm:BBFC} it is assumed that the initial value $I_S(0)$ lies within a certain interval. If the upper bound $I_S(0) \le \vp^+-\eps^+$ does not hold, then even a strict lockdown with $u(t)=1$ for $t\ge 0$ may not be sufficient to guarantee~\eqref{eq:ICU}.

\subsection{{Robustness of the control design}}

In the following second main result we show that, when the controller parameters $(\eps^-,\eps^+)$ are fixed, then there exists an open subset of the set of admissible system parameters, so that the controller~\eqref{eq:BBFC} with $(\eps^-,\eps^+)$ is feasible for every system~\eqref{eq:SIRASD} with parameters from this open set. In other words, the controller~\eqref{eq:BBFC} is robust with respect to uncertainties in the system parameters. {In order to show this result we need to consider a subset $\Sigma_{\rm rob}$ of~$\Sigma$, where the parameters satisfy an additional assumption:
\[
    \Sigma_{\rm rob} := \setdef{Z\in\Sigma}{ \text{(A6) holds}},
\]
where
\begin{enumerate}[\hspace{2pt}(A1)]
\item[(A6)] $\left( \tfrac{1}{M_2} - \tfrac{1-\rho}{\alpha_S}\right) \left( pNM_1 - pR^0 M_1 - M_2\right) > 1$ and $p N M_1 (\zeta+1) > \beta_A \zeta + \beta_S$.\\[-2mm]
\end{enumerate}
Assumption~(A6) is of pure technical nature and typically satisfied in real epidemiological scenarios.}
Since the feasibility of~\eqref{eq:BBFC} follows from Theorem~\ref{Thm:BBFC}, it is sufficient to show that $(\eps^-,\eps^+)\in C_Z$ is contained in any set $C_{\tilde Z}$ for $\tilde Z$ in a neighborhood of~$Z\in \Sigma_{\rm rob}$.

\begin{theorem}\label{Thm:robust}
For all interior points $Z$ of {$\Sigma_{\rm rob}$} and all $(\eps^-,\eps^+)\in C_Z$ there exists $\delta>0$ such that $B_\delta(Z) \subseteq {\Sigma_{\rm rob}}$ and
\[
    \forall\, \tilde Z \in B_\delta(Z):\ (\eps^-,\eps^+)\in C_{\tilde Z},
\]
where $B_\delta(Z)$ denotes the open ball with radius~$\delta$ around~$Z$ in $[0,1]^9\times [0,\infty)^9$.
\end{theorem}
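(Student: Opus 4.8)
The plan is to read the theorem as a continuity-and-openness statement and reduce it to the persistence of finitely many strict inequalities. Membership $(\eps^-,\eps^+)\in C_{\tilde Z}$ is, by definition, the conjunction of the three strict conditions $\eps^-<\tilde\vp^+-\eps^+$, (A4) and (A5), all formed from the constants of~\eqref{eq:constants} attached to $\tilde Z$ while the given values $\eps^-,\eps^+$ are held fixed. Since by hypothesis $(\eps^-,\eps^+)\in C_Z$, these three inequalities hold strictly at $\tilde Z=Z$. Hence it suffices to encode them by three continuous functions of $\tilde Z$ and to show each stays positive on a small ball around $Z$.

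First I would establish that, on a neighbourhood of $Z$, every constant in~\eqref{eq:constants} and the function $q$ from~\eqref{eq:proof-CZ} depend continuously on the parameter tuple. Because $Z\in\Sigma$, all these constants are well defined at $Z$; in particular $R^0,I_S^0,N,p,\alpha_S>0$ by (A1)--(A2), $S_{\min}>0$ and $\bar K_\psi\bar\psi S_{\min}>0$ (using $\gamma_K<\tfrac{1-\rho}{\rho\alpha_A}$ from (A1)), $M_1>0$ by (A1), and $B\neq0$ (otherwise $\zeta$ would be undefined). Each of these is an \emph{open} condition on the parameter, so all of them persist on some ball $B_{\delta_1}(Z)$; there $\tilde\beta,A,B,\zeta,M_1,M_2,M_3$ and $\vp^+$ are continuous (the maxima in $S_{\min}$ and $\zeta$ being harmless, as a maximum of continuous functions is continuous), and no denominator degenerates.

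Next I would build the three encoding functions. Put $\eps(\tilde Z):=\tilde\vp^+-\eps^+$ and note that (A4) at $Z$ gives $\eps(Z)>\tfrac{M_2}{M_1}$, so the denominator $\tfrac{\alpha_S}{1-\rho}+\big(M_1\eps(\tilde Z)-M_2\big)$ of $q$ is bounded below by $\tfrac{\alpha_S}{1-\rho}>0$ at $Z$ and, being continuous, stays positive on a smaller ball; thus $q(\eps(\tilde Z))$ is defined and continuous there. Define $F_1(\tilde Z):=\tilde\vp^+-\eps^+-\eps^-$, $F_2(\tilde Z):=\tilde\vp^+-\eps^+-\tfrac{M_2}{M_1}$ and $F_3(\tilde Z):=\tilde\vp^+-q\big(\eps(\tilde Z)\big)$, all in the $\tilde Z$-constants. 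Each $F_i$ is continuous near $Z$, and $F_1(Z),F_2(Z),F_3(Z)>0$ is precisely the statement $(\eps^-,\eps^+)\in C_Z$. Continuity then yields $\delta_2>0$ with $F_1,F_2,F_3>0$ on $B_{\delta_2}(Z)$, i.e.\ $(\eps^-,\eps^+)\in C_{\tilde Z}$ there.

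Finally, since $Z$ is an interior point of $\Sigma_{\rm rob}$ there is $\delta_3>0$ with $B_{\delta_3}(Z)\subseteq\Sigma_{\rm rob}$ (this is where (A6)---well defined only when $M_2\neq0$---is inherited by the nearby parameters). Taking $\delta:=\min\{\delta_1,\delta_2,\delta_3\}$ delivers both $B_\delta(Z)\subseteq\Sigma_{\rm rob}$ and $(\eps^-,\eps^+)\in C_{\tilde Z}$ for all $\tilde Z\in B_\delta(Z)$. I expect the only real obstacle to be the bookkeeping of Step~2: one must check that none of the nested denominators, nor the radicand $\tfrac{A^2}{4}+p(1-p)\beta_A\beta_S$ defining $B$, collapses under perturbation. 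A cleaner, more quantitative route to the positivity of $F_3$ is available via the monotonicity of $q$, and here the second part of (A6), $pNM_1(\zeta+1)>\beta_A\zeta+\beta_S$, is exactly the condition that the $\eps$-slope of the numerator of $q$ stays positive; together with $M_1>0$ this controls the behaviour of $q$ near $\tfrac{M_2}{M_1}$ and rules out a spurious sign change of $F_3$, which is presumably how the assumption enters an explicit version of the argument.
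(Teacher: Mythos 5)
Your proof is correct, but it takes a genuinely different route from the paper's. The paper first uses~(A6) to prove that the function $q$ from~\eqref{eq:proof-CZ} is strictly monotonically increasing on $\left[\tfrac{M_2}{M_1},\vp^+\right]$ (computing $q'=q_1/q_2^2$, showing $q_1\big(\tfrac{M_2}{M_1}\big)>0$ from the first condition in~(A6) and $q_1'>0$ from the second), hence invertible; this allows it to describe $C_{\tilde Z}$ as an explicit open box whose boundary functions~-- one of them being $q_{\tilde Z}^{-1}(\vp^+)$~-- depend continuously on $\tilde Z$, and only then does continuity keep the fixed pair $(\eps^-,\eps^+)$ inside the box. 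You bypass the inversion of $q$ entirely: since only one fixed pair must be tracked, you evaluate $q_{\tilde Z}$ at the fixed argument $\tilde\vp^+-\eps^+$ and encode membership in $C_{\tilde Z}$ as three strict inequalities between functions continuous in $\tilde Z$, after checking that no denominator, nor the radicand defining $B$, degenerates near $Z$, and that the denominator of $q$ stays positive thanks to~(A4) holding at $Z$. This is more elementary, and it exposes something the paper's argument obscures: (A6), and hence the restriction to $\Sigma_{\rm rob}$ rather than all of $\Sigma$, is never actively used in your argument~-- it enters only passively through the interiority hypothesis $B_{\delta_3}(Z)\subseteq\Sigma_{\rm rob}$. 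What the paper's heavier machinery buys is an exact interval characterization of the feasible $\eps^+$ for every nearby $\tilde Z$ (informative if one wants to compute the whole set $C_{\tilde Z}$, not merely preserve one point), and that is precisely where~(A6) is consumed; your closing speculation about the role of the second condition in~(A6) (positivity of the $\eps$-slope of the numerator of $q$) matches the paper's computation of $q_1'$ exactly. One small caveat: your parenthetical deriving $\bar K_\psi\bar\psi S_{\min}>0$ from $\gamma_K<\tfrac{1-\rho}{\rho\alpha_A}$ alone is incomplete, since $\bar\psi>0$ is also needed; but this follows from $M_1>0$, which is part of~(A1) and which you list anyway, so the argument stands.
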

\begin{proof}
Let $Z\in{\Sigma_{\rm rob}}$ and $(\eps^-,\eps^+)\in C_Z$ be fixed. Since $Z$ is an interior point of ${\Sigma_{\rm rob}}$ there exists $\delta_1>0$ such that $B_{\delta_1}(Z)\subseteq {\Sigma_{\rm rob}}$. {First, consider the function $q:[\tfrac{M_2}{M_1},\vp^+]\to \R$ defined in~\eqref{eq:proof-CZ}, where the appearing constants are defined in terms of~$Z$. We show that $q$ is strictly monotonically increasing. To this end, observe that $q'(\eps) = \tfrac{q_1(\eps)}{q_2(\eps)^2}$ with $q_2(\eps) = \tfrac{\alpha_S}{1-\rho} +  M_1\eps  - M_2$ and
\begin{align*}
  q_1(\eps) &= \Big(p z \big( 1 - \tfrac{R^0}{N} - \tfrac{2\eps}{pN}\big) + p (\zeta+1) ( 2M_1 \eps - M_2)\Big) q_2(\eps)\\
  & - p z M_1 \eps \big( 1 - \tfrac{R^0}{N} - \tfrac{\eps}{pN}\big) - p M_1  (\zeta+1) \eps (M_1\eps - M_2),
\end{align*}
where $z := \beta_A \zeta + \beta_S$. Then $q_1(\tfrac{M_2}{M_1}) > 0$ if, and only if,
\begin{align*}
    &0< \tfrac{\alpha_S}{1-\rho}\big( 1 - \tfrac{R^0}{N} - \tfrac{2M_2}{pNM_1}\big) - M_2 \big( 1 - \tfrac{R^0}{N} - \tfrac{M_2}{pN M_1}\big) \\
    &\Longleftarrow\quad \text{(A6)}.
\end{align*}
Furthermore,
\[
    q_1'(\eps) = 2\big( p(\zeta+1) M_1 - \tfrac{z}{N}\big) q_2(\eps),
\]
which is positive if, and only if, $p(\zeta+1) M_1 - \tfrac{z}{N}> 0$ and this is guaranteed by the second condition in~(A6). We have now shown that $q_1$ is strictly increasing on $[\tfrac{M_2}{M_1},\vp^+]$ and positive at $\eps = \tfrac{M_2}{M_1}$, thus $q'(\eps)$ is positive on $[\tfrac{M_2}{M_1},\vp^+]$ and hence $q$ is strictly increasing on $[\tfrac{M_2}{M_1},\vp^+]$ and therefore invertible.}

{For any $\tilde Z\in\Sigma_{\rm rob}$, let $q_{\tilde Z}$ be the corresponding function as in~\eqref{eq:proof-CZ} as considered above. Define the functions
\begin{align*}
  &f_1:B_{\delta_1}(Z)\times\R \to \R,\ (\tilde Z,s)\mapsto \vp^+-s,\\
  &f_2:B_{\delta_1}(Z)\to\R,\ \tilde Z\mapsto \vp^+ - \tfrac{M_2}{M_1},\\
  &f_3:B_{\delta_1}(Z) \to \R,\ \tilde Z\mapsto q_{\tilde Z}^{-1}(\vp^+),
\end{align*}
then} we find that, for all $\tilde Z\in B_{\delta_1}(Z)$,
\[
    C_{\tilde Z} = \setdef{(s,t)\in (0,\infty)^2}{\!\!\begin{array}{l} s\in {(0,f_1(\tilde Z,t))},\\ t\in {(f_2(\tilde Z),f_3(\tilde Z))}\end{array}\!\!\!\!}.
\]
Now we have that
\[
    \eps^- \in {(0,f_1(Z,\eps^+))},\quad \eps^+ \in {(f_2(Z),f_3(Z)),}
\]
and since $f_3,f_4$ are continuous and {$\tilde Z \mapsto f_1(\tilde Z,\eps^+)$} is continuous as well on $B_{\delta_1}(Z)$, there exists $\delta\in (0,\delta_1)$ such that for all $\tilde Z \in B_\delta(Z)$ we have
\[
    {\eps^- \in (0,f_1(\tilde Z,\eps^+)),\quad \eps^+ \in (f_2(\tilde Z),f_3(\tilde Z)),}
\]
which finishes the proof.
\end{proof}

{
While we have shown in Theorem~\ref{Thm:robust} that, for a fixed pair of controller parameters, the control strategy~\eqref{eq:BBFC} is robust with respect to uncertainties in the system parameters, it still depends on the structure of the model~\eqref{eq:SIRASD}. Nevertheless, if a different epidemiological model is chosen, replacing~\eqref{eq:SIRASD}, then {Theorem~\ref{Thm:BBFC} and its proof can be accordingly adjusted (even though a new analysis would be necessary) so that} the controller~\eqref{eq:BBFC} is still feasible for a suitable choice of parameters $(\eps^-,\eps^+)$ and its robustness properties are retained.
}

\section{Simulations}\label{Sec:Sim}

In this section we illustrate our findings by a simulation of the epidemiological model~\eqref{eq:SIRASD} under the feedback control law~\eqref{eq:BBFC}. 
For the simulation we {consider a population of $N=10^5$ individuals in an \textit{Example City}.} At time $t=0$ the {population of \textit{Example City}} is divided into
\begin{align*}
    &{S(0) = 0.9 \cdot 10^5 - 50,\ I_A(0) = 49,\ I_S(0) = 1,}\\
    &{R(0) = 0.1\cdot 10^5,}\ D(0) = 0,
\end{align*}
i.e., we have {fifty symptomatic or} asymptomatic infected 
and we assume that {$10\%$} of the population already developed immunity due to a prior disease and hence belong to the class of recovered individuals. This number may be reasonable for the beginning of the COVID-19 pandemic (at least in some regions in Germany) as e.g.\ the COVID-19 Case-Cluster-Study~\cite{Stre20} (Heinsberg study) suggests.

{We further assume that the disease spreads in \textit{Example City} according to the} parameters that have been used in~\cite{MoraBast20} {and which are} summarized in Table~\ref{Tab:sim-param}. {Note that we chose a smaller value for~$p$ than in~\cite{MoraBast20}, since the number of patients who require intensive care is typically much lower than the number of symptomatic infected in the model from~\cite{MoraBast20}.}

\begin{table}[ht]
\centering
\resizebox{\columnwidth}{!}{
\begin{tabular}{c|c|c|c|c|c|c|c|c|c}
    $\beta_A$ & $\beta_S$ & $\alpha_A$ & $\alpha_S$ & $p$ & $\rho$ & $\gamma_0$ & $\gamma_1$ & $\bar \psi$ & $\gamma_K$ \\
    \hline
    0.37 & 0.43 & 0.1 & 0.085 & 0.02 & 0.15 & 1 & 1 & 0.31 & 1
\end{tabular}
}
\caption{Parameter values for the SIRASD model~\eqref{eq:SIRASD} taken from~\cite[Sec.~3.2]{MoraBast20} for the case of the ``Uncertain 1'' model. {Only the values of $\alpha_S,p$ and of $\gamma_0,\gamma_1,\bar \psi$ for the population response are different from~\cite{MoraBast20}.}}\label{Tab:sim-param}
\end{table}

The number of available ICU beds {in \textit{Example City} is assumed to correspond to the available} capacity in Germany, which are $32637$ beds (based on data from \url{de.statista.com}, July 21, 2020) for a population of about $8.3\cdot 10^7$. Preserving this ratio, we obtain for {\textit{Example City} with} $N$ inhabitants a number of $n_{\rm ICU} = 40$ (rounded to the next integer) ICU beds. As tolerance we choose $\xi = 0.1$ and we recall that $\vp^+ = (1+\xi) n_{\rm ICU}$ is the threshold that the number $I_S$ of symptomatic infected should not exceed, cf.~\eqref{eq:ICU}. For purposes of comparison, Fig.~\ref{fig:sim-u=0} shows the simulation results in the case that the {policy makers} take no action and hence $u(t) = 0$ for all $t\ge 0$. It can clearly be seen that the number $I_S(t)$ quickly increases above the available ICU capacity and the number of deaths rises dramatically; note that the effect of the exceeded ICU capacity is not considered in the model and hence the number of deaths might even be much higher.

\begin{figure}[h]
\begin{subfigure}{.9\linewidth}
\begin{center}
  \includegraphics[scale=0.6]{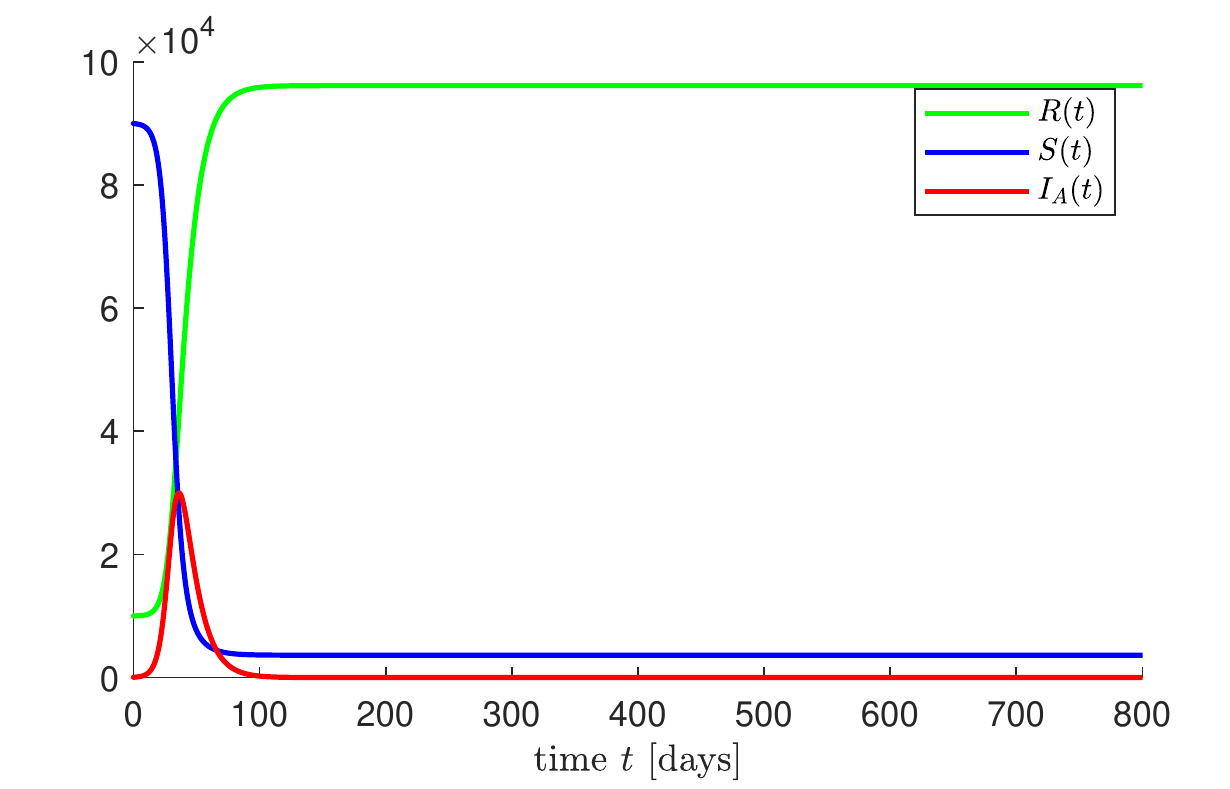}
  \caption{Susceptible, recovered and asymptomatic infected}
  \end{center}
\end{subfigure}\\
\begin{subfigure}{.9\linewidth}
\begin{center}
  \includegraphics[scale=0.6]{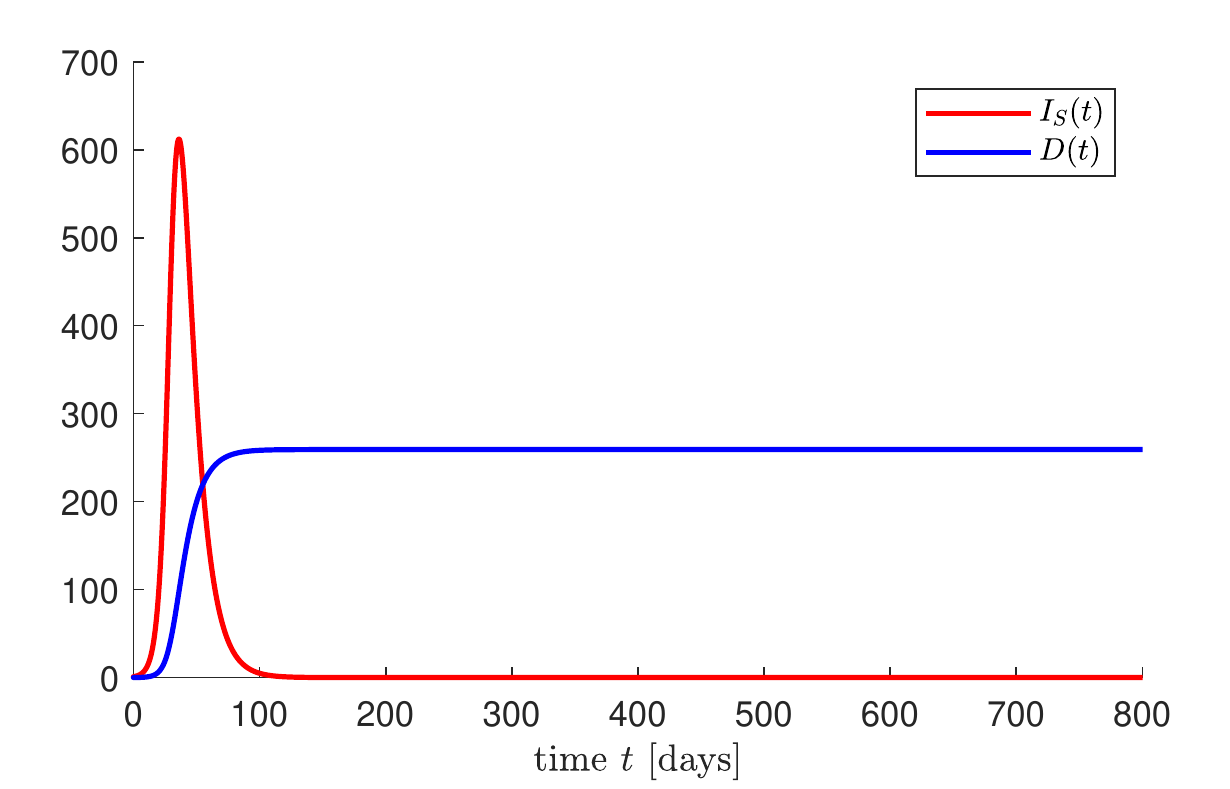}
  \caption{Symptomatic infected and deceased}
  \end{center}
\end{subfigure}
\caption{Simulation of the epidemiological model~\eqref{eq:SIRASD} under $u=0$.}
\label{fig:sim-u=0}
\end{figure}

In contrast to this, Fig.~\ref{fig:sim} shows the same scenario but with social distancing measures enacted according to the control~\eqref{eq:BBFC}. For this simulation we have chosen {$\eps^+ = 2.5 \xi \cdot n_{\rm ICU}$}, which is able to guarantee~\eqref{eq:ICU}. Furthermore, we have chosen two different values for $\eps^-$, namely {$\eps^-_1 = 2 \xi \cdot n_{\rm ICU}$} and {$\eps^-_2 = 5 \xi \cdot n_{\rm ICU}$. We stress that the chosen parameters satisfy assumptions (A1)--(A3), where in particular the lower bound for~$\vp^+$ in~(A3) is $\vp^+ > \max\{\tfrac{M_2}{M_1},M_3\} \approx 23.9$ and hence satisfied. Furthermore, both pairs of controller parameters $(\eps^-_1,\eps^+)$ and $(\eps^-_2,\eps^+)$ satisfy (A4)--(A5). Finally, (A6) is satisfied as well, so that the controller~\eqref{eq:BBFC} is also robust.}

\begin{figure*}[h!]
\begin{subfigure}{\linewidth}
\begin{center}
  \includegraphics[width=0.6\linewidth,trim=0 110 0 0, clip]{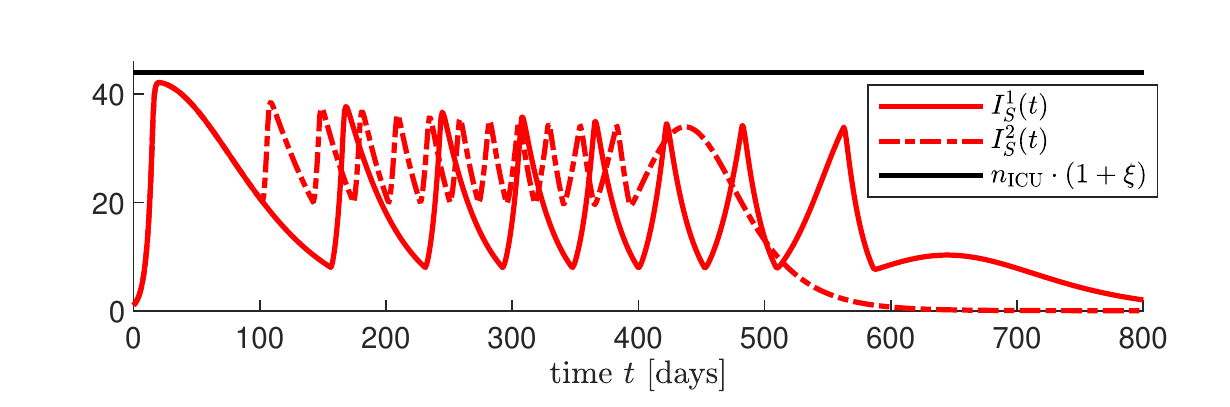}
  \caption{Symptomatic infected and ICU capacity}
  \end{center}
\end{subfigure}\quad
\begin{subfigure}{.48\linewidth}
\begin{center}
  \includegraphics[scale=0.6]{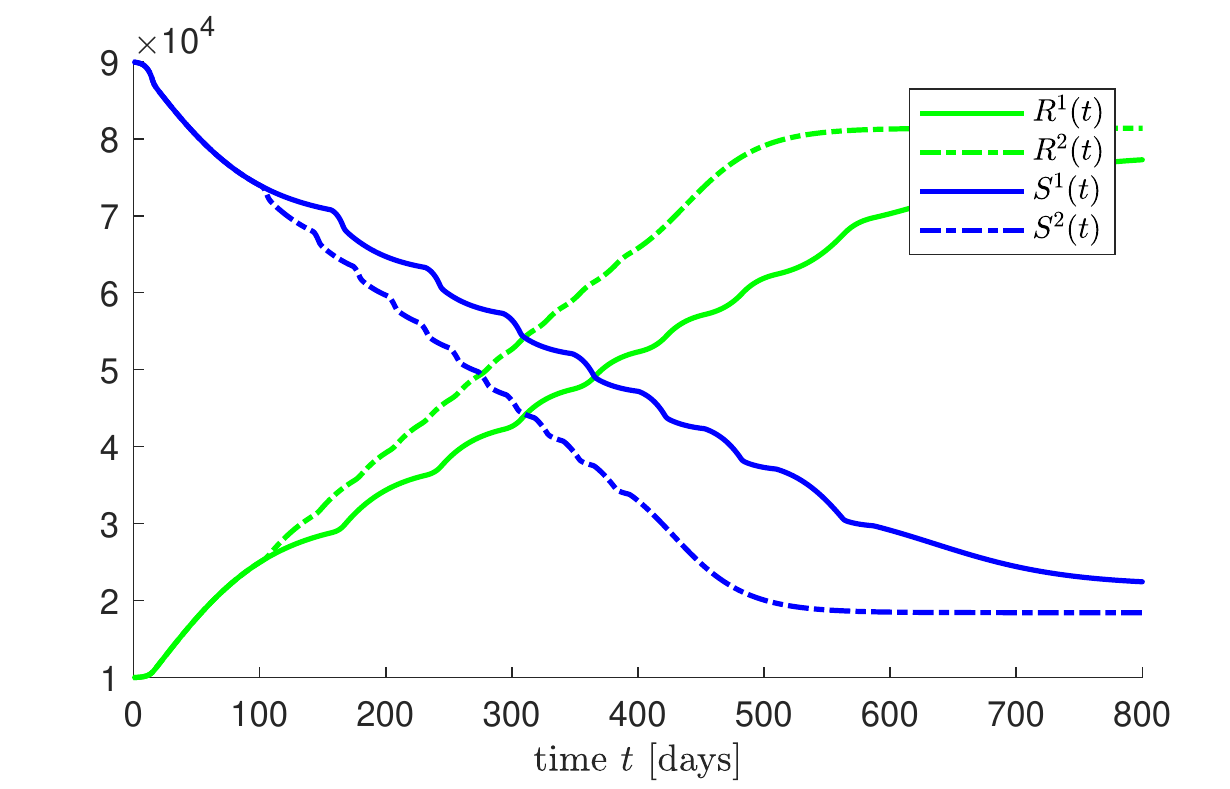}
  \caption{Susceptible and recovered}
  \end{center}
\end{subfigure}
\begin{subfigure}{.48\linewidth}
\begin{center}
  \includegraphics[scale=0.6]{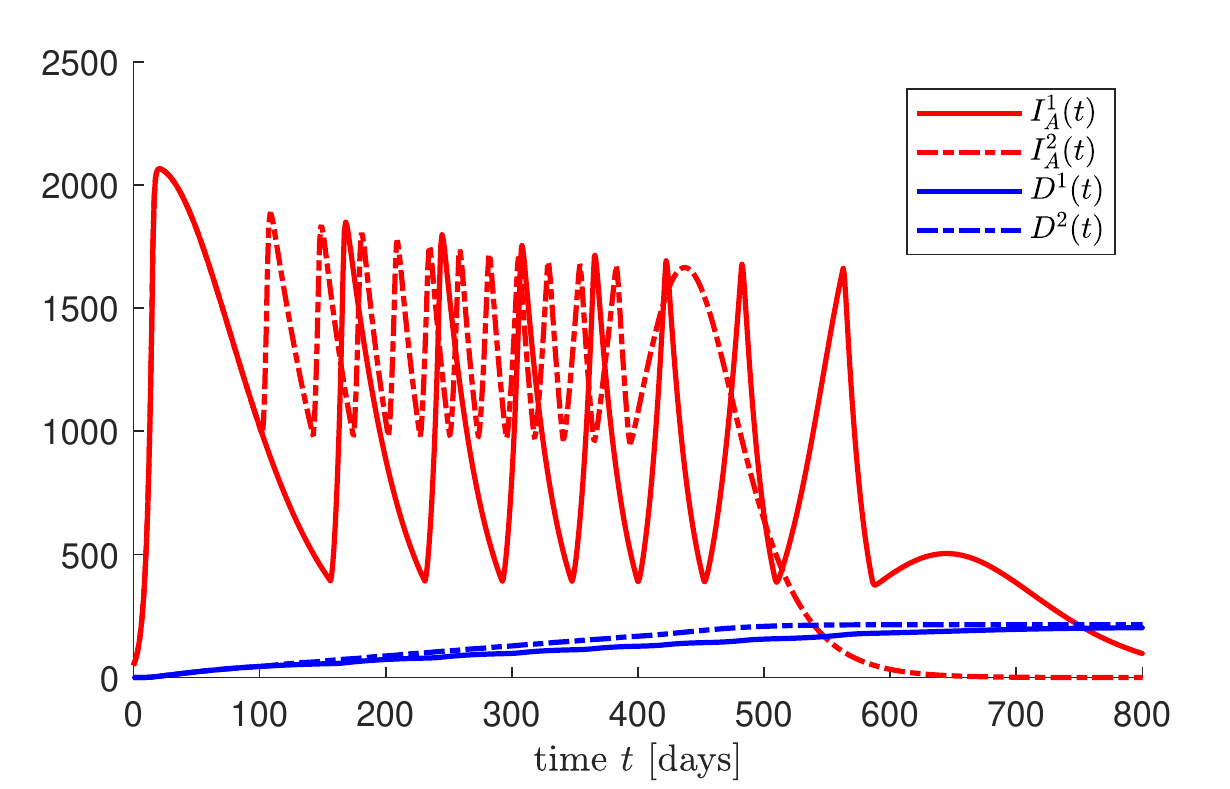}
  \caption{Asymptomatic infected and deceased}
  \end{center}
\end{subfigure}\quad
\begin{subfigure}{.48\linewidth}
\begin{center}
  \includegraphics[scale=0.65]{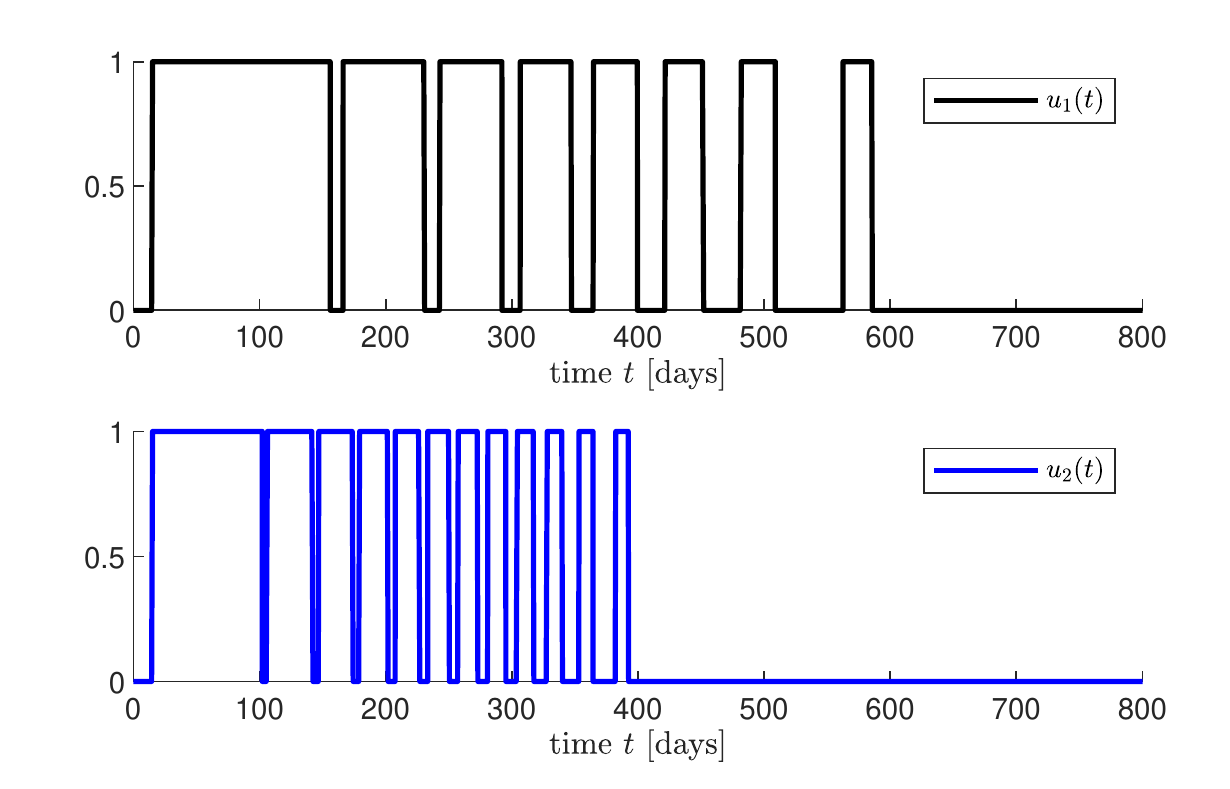}
  \caption{Input functions}
  \end{center}
\end{subfigure}\quad
\begin{subfigure}{.48\linewidth}
\begin{center}
  \includegraphics[scale=0.65]{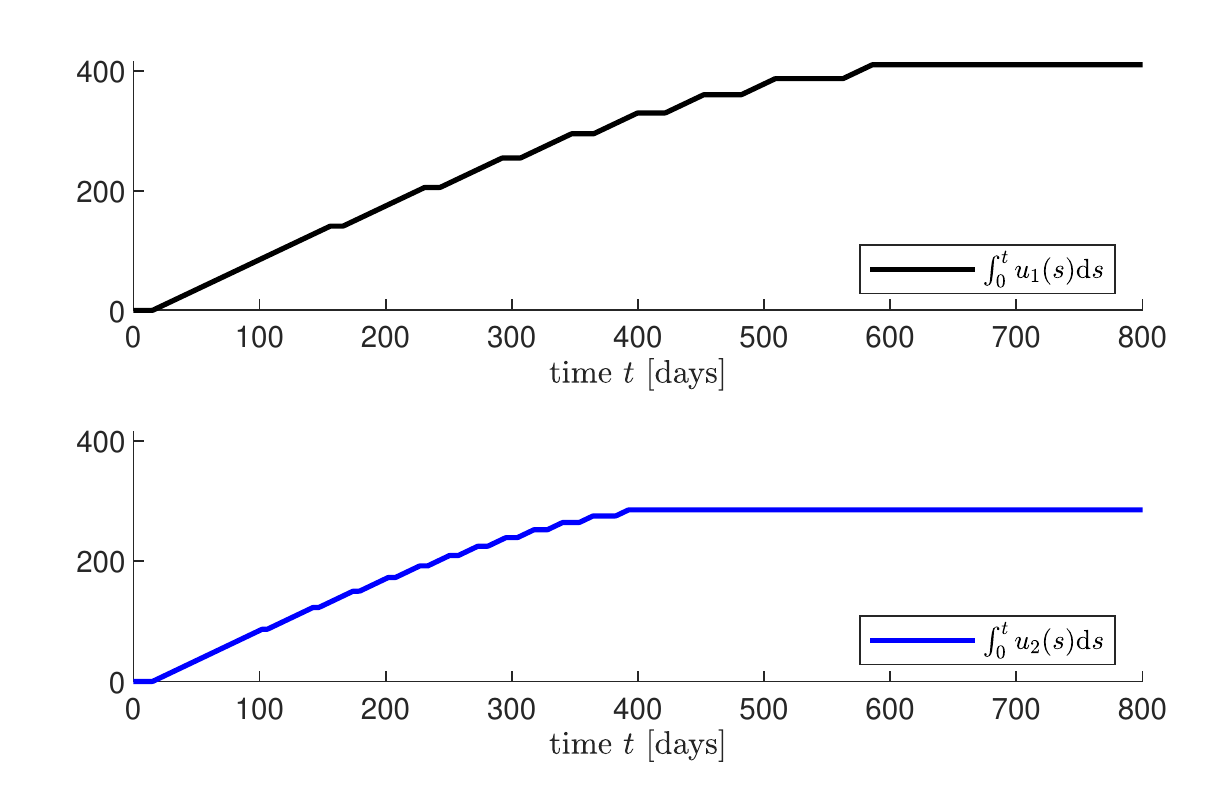}
  \caption{Input costs}
  \end{center}
\end{subfigure}
\caption{Simulation of the epidemiological model~\eqref{eq:SIRASD} under control~\eqref{eq:BBFC} with parameters $\eps^+$ and $\eps^-_i$ for $i=1,2$.}
\label{fig:sim}
\end{figure*}

Similar to various studies before, the simulations depicted in Fig.~\ref{fig:sim} show that social distancing measures are capable of reducing the total number of infected individuals and, as a consequence, the total number of disease induced deaths.  The feedback controller~\eqref{eq:BBFC} is able to guarantee~\eqref{eq:ICU} as shown in Fig.~\ref{fig:sim}\,(a). It can be seen that, while periods with $u(t) = 1$ ensure that the ICU capacity is not exceeded, periods with $u(t) = 0$ (no social distancing) result in quick increases of infection numbers and the switch in the controller~\eqref{eq:BBFC} is triggered again after only a short period of time (at most a few weeks). This shows that lifting social distancing measures over a longer period of time will not be feasible until the end of the pandemic.

As shown in Fig.~\ref{fig:sim}\,(d) the control input $u_2$ corresponding to $\eps^-_2$ exhibits a faster switching pattern than the input~$u_1$ corresponding to~$\eps^-_1$. Several other simulations show that larger values of~$\eps^-$ lead to a faster switching with shorter periods between the switches, but, on the other hand, the pandemic is defeated at an earlier time point (i.e., the time $T>0$ for which $u(t) = 0$ for all $t\ge T$ can be made smaller the larger~$\eps^-$ is chosen). These are two conflicting objectives (few switches vs.\ shorter pandemic) and the {policy makers} have to decide which should be favored; the controller design parameters may be adjusted accordingly.

Another observation reveals that the total number of deaths, i.e., $D_{\max} = \lim_{t\to\infty} D(t)$ depends on the choice of~$\eps^-$. Since minimizing~$D_{\max}$ seems a reasonable goal we have performed a set of simulations so that this is achieved, which led to the above value of $\eps^- = \eps^-_1$. As shown in Fig.~\ref{fig:sim}\,(c) the number of deaths is indeed higher for $\eps^-_2$ and, as shown in Fig.~\ref{fig:sim}\,(b), the total number of infected (represented by $N-R(0)-S(t)$) is larger for~$\eps^-_2$.

{Fig.~\ref{fig:sim}\,(e) shows the input costs associated with the control signals~$u_1$ and~$u_2$ in terms of their $L^1$-norm on the interval $[0,t]$ for $t\ge 0$. It can be seen that these costs are much higher for~$u_1$. {Summarizing, a simple rule of thumb seems to be: Trying to keep the number of symptomatic infected as high as possible without exceeding the ICU capacity seems to achieve the shortest possible course of the pandemic with the smallest input costs, to the detriment of fast switching and a higher number of deaths.} However, the ``total costs'' of the control strategy cannot be assessed by solely considering the input costs, but the total duration of the pandemic and the total number of deaths (as mentioned above) must also be taken into account when defining a suitable total cost function~-- this is a topic of future research which should also involve objectives defined by policy makers.}

Finally, by way of comparison, we like to note that effective control methods for the COVID-19 pandemic based on model-predictive control (MPC) have been developed in~\cite{KohlSchw20,MoraBast20}. However, MPC requires accurate model data and measurements of all state variables for feasibility. As shown in~\cite{KohlSchw20}, uncertainty in the data and measurements can be compensated to a certain extent by using e.g.\ interval predictions, however it is not possible to prove \textit{a priori} that MPC does not exceed the available ICU capacity. This is one of the advantages of the approach presented here.

\section{Conclusion}

We have presented a novel feedback controller which may serve as a decision making mechanism for {policy makers} during the COVID-19 pandemic. The controller is based on the bang-bang funnel controller from~\cite{LibeTren13b} and {robust with respect to uncertainties in the parameters} of the underlying epidemiological model. {It only requires} the measurement of the number of individuals with moderate to severe symptoms, and it is able to keep this number below a threshold determined by the ICU capacity at any time. {Furthermore, the interval of time between successive switching can be influenced by the choice of the controller design parameters.}

Simulations illustrate that the proposed controller~\eqref{eq:BBFC} is able to achieve the control objective and that a relaxation of social distancing policies may quickly lead to increasing infection numbers. Although a relaxation of the distancing measures over a period of only 1--2 weeks may seem pointless, the simulations suggest that a temporary increase of infection numbers, while preventing that the ICU capacity is exceeded, ensures a less extended time course of the COVID-19 pandemic compared to a strict lockdown. At the same time this allows a resumption of social and economic activities during these periods. By a continuous improvement of the available data and the simulations it should be possible to obtain better forecasts of when the input signal switches, which would allow the people to prepare for possible measures or relaxation.

Although the results presented in this work are quite promising regarding automated signals for social distancing measures, this is only a first step towards a universal technique. Future research needs to focus on methods which incorporate different levels of social distancing measures compared to only the two levels (full measures or no measures at all) considered in the present paper. A balanced use of such regulations is more practicable and will be accepted by a wider public. Typical examples are cancelation of big events, carrying face masks in supermarkets and public transport or working from home when possible; these measures may be included in the model by additional control values $u_i \in (0,1)$ as suggested e.g.\ in~\cite{KohlSchw20}.

Another topic of future research is the combination of different models for different countries or regions, where different control values (due to government policies) are active. In particular, it needs to be investigated how the interactions between different regions, based on migration flows, influence the spread of the disease. Such an approach will possibly reveal which social distancing measures must be taken in neighboring regions with different outbreak levels.

Last but not least, we like to note that the approach presented here is not restricted to the model~\eqref{eq:SIRASD} or to the COVID-19 pandemic specifically, but the feedback controller~\eqref{eq:BBFC} can be applied, \textit{mutatis mutandis}, to any epidemiological model available in the literature (modeling any epidemic), appended by the dynamics for the population response.



\appendix

\section{Properties of solutions}

\renewcommand\appendixname{\!}

\begin{lemma}\label{lem:pos_sln}
{Assume that $(S,I_A,I_S,R,D,\psi):[0,\omega)\to \R^6$ is a solution of~\eqref{eq:SIRASD} for non-negative initial conditions and some piecewise constant function $u:[0,\omega)\to \{0,1\}$, where $\omega\in(0,\infty]$. Then
\begin{enumerate}
  \item $S(t), I_A(t), I_S(t), R(t)$ and $D(t)$  are non-negative for all $t\in[0,\omega)$,
  \item if~(A1) and~(A2) hold, then $I_A(t)\ge \tfrac{1-p}{p} I_S(t)$ for all $t\in[0,\omega)$,
  \item if~(A2) holds, then  $S(t) \ge S_{\min}$ for all $t\in[0,\omega)$, where $S_{\min}$ is defined in~\eqref{eq:constants},
  \item $I_A(t) \le \max\{I_A(0)/I_S(0), (1-p)\beta_S/B\} I_S(t)$ for all $t\in[0,\omega)$, where $B$ is defined in~\eqref{eq:constants}.
\end{enumerate} }
\end{lemma}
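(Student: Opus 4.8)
The plan is to treat the four parts in order, using each to bootstrap the next. For (i) I would prove forward invariance of the nonnegative orthant by a quasi-positivity argument. First, $S$ keeps its sign: writing $\dot S=-c(t)S$ with $c(t)=\psi(t)(\beta_A I_A(t)+\beta_S I_S(t))/(N-D(t))$, variation of constants gives $S(t)=S(0)\exp(-\int_0^t c)$, so $S(0)\ge 0$ forces $S(t)\ge 0$ regardless of the sign of $c$. The pair $(I_A,I_S)$ solves a linear time-varying system whose off-diagonal infection couplings $(1-p)\beta_S\psi S/(N-D)$ and $p\beta_A\psi S/(N-D)$ are nonnegative precisely because $\psi\ge 0$; the system is thus cooperative and the nonnegative quadrant is invariant. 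Finally $\dot R,\dot D\ge 0$ once $I_A,I_S\ge 0$, so $R,D$ are nondecreasing and stay nonnegative. The only delicate point is $\psi\ge 0$: on any interval with $u\equiv 0$ the equation $\dot\psi=\gamma_0(1-\psi)$ drives $\psi$ toward $1$, and on any interval with $u\equiv 1$ one has $\dot\psi=\gamma_1(K_\psi\bar\psi-\psi)$ with $K_\psi\ge\bar K_\psi$ (using $0\le I_A\le N-D$), so that the bound $\gamma_K<\tfrac{1-\rho}{\rho\alpha_A}$ in (A1), giving $\bar K_\psi>0$, keeps $\dot\psi\ge 0$ at $\psi=0$. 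A first-hitting-time argument then excludes any coordinate from crossing zero.

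For (ii) I would consider $w:=I_A-\tfrac{1-p}{p}I_S$, which is nonnegative at $t=0$ by (A2). The common force-of-infection term cancels in $\dot w=\dot I_A-\tfrac{1-p}{p}\dot I_S$, leaving $\dot w=-\alpha_A I_A+\tfrac{1-p}{p}\tfrac{\alpha_S}{1-\rho}I_S=-\alpha_A w+\tfrac{1-p}{p}\big(\tfrac{\alpha_S}{1-\rho}-\alpha_A\big)I_S$. By (A1) the factor $\tfrac{\alpha_S}{1-\rho}-\alpha_A\ge 0$ and $I_S\ge 0$ by (i), so $\dot w\ge-\alpha_A w$ and Gr\"onwall gives $w(t)\ge w(0)e^{-\alpha_A t}\ge 0$. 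For (iii) I would use the logarithmic derivative $\tfrac{d}{dt}\ln S=-\psi(\beta_A I_A+\beta_S I_S)/(N-D)$ and bound its integrand from above via $\psi\le 1$, $\beta_A I_A+\beta_S I_S\le\max\{\beta_A,\beta_S\}(I_A+I_S)$, $N-D=S+I_A+I_S+R\ge R\ge R^0>0$, and $I_A+I_S\le\dot R/\min\{\alpha_A,\alpha_S\}$. Integrating converts $\int_0^t(I_A+I_S)/(N-D)$ into at most $(R(t)-R^0)/(\min\{\alpha_A,\alpha_S\}R^0)\le(N-R^0)/(\min\{\alpha_A,\alpha_S\}R^0)$, which reproduces exactly the exponent in the definition of $S_{\min}$ (the degenerate case $\min\{\alpha_A,\alpha_S\}=0$ gives $S_{\min}=0$ and is covered by (i)).

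Part (iv) is the main obstacle. I would study the ratio $r:=I_A/I_S$, which is well defined since $\dot I_S\ge-\tfrac{\alpha_S}{1-\rho}I_S$ keeps $I_S>0$. Writing $g:=\psi S/(N-D)$, a direct computation yields the Riccati-type inequality $\dot r=g\,Q(r)+\big(\tfrac{\alpha_S}{1-\rho}-\alpha_A\big)r$ with $Q(r)=-p\beta_A r^2+((1-p)\beta_A-p\beta_S)r+(1-p)\beta_S$. The crucial input is the uniform lower bound $g\ge g_{\min}:=\bar K_\psi\bar\psi S_{\min}/N$, obtained from \eqref{eq:bound-psi}, part (iii), and $N-D\le N$. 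For $r\ge r_+$ one has $Q(r)\le 0$ (as $r_+$ lies beyond the larger root of $Q$), whence $g\,Q(r)\le g_{\min}Q(r)$ and $\dot r\le P(r):=g_{\min}Q(r)+(\tfrac{\alpha_S}{1-\rho}-\alpha_A)r$. The downward-opening quadratic $P$ has positive root $r_+$ solving $p\beta_A r^2-Ar-(1-p)\beta_S=0$, where $A=(1-p)\beta_A-p\beta_S+(\tfrac{\alpha_S}{1-\rho}-\alpha_A)/g_{\min}$ matches \eqref{eq:constants} because $1/g_{\min}=N/(\bar K_\psi\bar\psi S_{\min})$. Rationalizing shows $r_+=(1-p)\beta_S/B$, so $\dot r\le 0$ whenever $r\ge r_+$, and a barrier argument gives $r(t)\le\max\{r(0),r_+\}=\zeta$, i.e.\ $I_A(t)\le\zeta I_S(t)$. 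The real effort lies in the algebra confirming that the positive root of the reduced quadratic equals exactly $(1-p)\beta_S/B$ and in verifying $Q(r_+)\le 0$, which is what legitimizes replacing $g$ by its lower bound.
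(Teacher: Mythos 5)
Your proposal is correct and follows essentially the same route as the paper's proof: the scalar variation-of-constants/invariance argument for (i), the identical Gr\"onwall estimates on $I_A-\tfrac{1-p}{p}I_S$ for (ii) and on $\ln S$ versus $\dot R$ for (iii), and the same Riccati barrier argument on the ratio $I_A/I_S$ for (iv), where your identification of the positive root of the reduced quadratic as $(1-p)\beta_S/B$ is precisely the paper's choice $C=B$ that balances the two thresholds in its bound. The differences are only cosmetic: you are more explicit about quasi-positivity, the nonnegativity of $\psi$ (which, as you note, leans on (A1) --- an assumption the paper's part (i) also implicitly needs), and the well-definedness of the ratio in (iv), all of which the paper compresses into ``similarly''.
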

\begin{proof}
{The proof of~(i) relies on the well-known concept of integrating factors, see e.g.~\cite[Thm.~1]{BakaNwag14}. To show that $S$ is non-negative, let $z(t):= (\beta_A I_A(t) + \beta_S I_S(t)) \tfrac{\psi(t)}{N-D(t)}$ and observe that
\[
  \ddt \left( S(t) e^{z(t)}\right) = \big(\dot S(t) + z(t) S(t)\big) e^{z(t)} = 0,
\]
thus $S(t) = e^{z(0)-z(t)} S(0) \ge 0$ for all $t\in [0,\omega)$. Similarly, it can be shown that $I_A, I_S, R$ and $D$ are non-negative.}\\
{To show~(ii), set $z(t):= I_A(t) - \frac{1-p}{p} I_S(t)$, then
\begin{align*}
    \dot z(t) &= -\alpha_A I_A(t) + \frac{1-p}{p} \frac{\alpha_S}{1-\rho} I_S(t) \\
    &= -\alpha_A z(t) + \frac{1-p}{p} \left(\frac{\alpha_S}{1-\rho} -\alpha_A\right) I_S(t) \stackrel{\rm (A1)}{\ge} -\alpha_A z(t)
\end{align*}
for almost all $t\in[0,\omega)$, and hence Gr\"onwall's lemma implies $z(t) \ge e^{-\alpha_A t} z(0)$. Therefore, we have $I_A(t) \ge e^{-\alpha_A t} z(0) + \frac{1-p}{p} I_S(t) \ge \frac{1-p}{p} I_S(t)$, where the last inequality follows from the assumption $I_A(0) \ge \frac{1-p}{p} I_S(0)$ in~(A2).}\\
{To show~(iii), set $\beta_{\max} := \max\{\beta_S, \beta_A\}$ and $\alpha_{\min} := \min\{\alpha_S,\alpha_A\}$ and recall $N-D(t) \ge R(t) \ge R^0$, then
\begin{align*}
  \dot S(t) &= - \big(\beta_A \psi(t) I_A(t) + \beta_S \psi(t) I_S(t) \big) \frac{S(t)}{N-D(t)}\\
   &\stackrel{\eqref{eq:bound-psi}}{\ge} - \big(\beta_A I_A(t) + \beta_S I_S(t) \big) \frac{S(t)}{R^0} \\
  &\ge -  {\frac{\beta_{\max}}{\alpha_{\min}}} \left( \alpha_A I_A(t) + \alpha_S I_S(t) \right) \frac{S(t)}{R^0} \\
  &= - {\frac{\beta_{\max}}{\alpha_{\min}}} \frac{\dot R(t)}{R^0} S(t)
\end{align*}
for almost all $t\in[0,\omega)$. Then  Gr\"onwall's lemma implies that, for all $t\in[0,\omega)$,
\begin{align*}
  S(t) &\ge S^0 \exp \left( -  {\frac{\beta_{\max}}{\alpha_{\min}}} \int_0^t \frac{\dot R(s)}{R^0} \, {\rm d}s\right) = S^0 e^{ -  \frac{{\beta_{\max}}(R(t)-R^0)}{{\alpha_{\min}} R^0}} \\
  &\ge  S^0 e^{ -  \frac{{\beta_{\max}} (N-R^0)}{ {\alpha_{\min}} R^0}} = S_{\min}.
\end{align*}}\\
{We show~(iv). Set $z(t):=\tfrac{I_A(t)}{I_S(t)}$ and $y(t):=\tfrac{\psi(t) S(t)}{N-D(t)}$, then we may observe that
\begin{align*}
  \dot z(t) &= \left( -p\beta_A z(t)^2 \!+\! ((1\!-\!p)\beta_A\!-\!p\beta_S) z(t) \!+\! (1\!-\!p) \beta_S\right) y(t)\\
   &\quad + \left(\tfrac{\alpha_S}{1-\rho}-\alpha_A\right) z(t)\\
  &= \Big( \Big( -p\beta_A z(t) + ((1-p)\beta_A-p\beta_S)\\
   &\quad + \tfrac{1}{y(t)} \left(\tfrac{\alpha_S}{1-\rho}-\alpha_A\right) \Big) z(t) + (1-p) \beta_S\Big)y(t).
\end{align*}
Therefore, for any $C>0$ it follows that
\begin{align*}
    z(t) &\le \max\Big\{z(0), \frac{(1-p) \beta_S}{C},\\
  &  \quad \frac{1}{p\beta_A}\left(C+(1-p)\beta_A-p\beta_S+\tfrac{1}{y(t)} \left(\tfrac{\alpha_S}{1-\rho}-\alpha_A\right)\right)\Big\}
\end{align*}
for all $t\in[0,\omega)$. Since for $C=B$ we find that
\begin{align*}
    &\frac{1}{p\beta_A}\left(B+(1-p)\beta_A-p\beta_S+\tfrac{1}{y(t)} \left(\tfrac{\alpha_S}{1-\rho}-\alpha_A\right)\right) \\
    &\le  \frac{B+A}{p\beta_A} = \frac{(1-p) \beta_S}{B}
\end{align*}
for~$A$ as in~\eqref{eq:constants}, the claim follows.}
\end{proof}

\bibliographystyle{elsarticle-harv}

\end{document}